\documentclass[a4paper,11pt]{article}
\usepackage{latexsym}
\usepackage{amssymb}
\usepackage{bbm}
\usepackage{enumitem}
\usepackage{amsfonts}
\usepackage{amsmath}
\usepackage{amsthm}
\usepackage{cases}
\usepackage[pdftex]{graphicx} %otherwise
\usepackage[paper=a4paper,left=30mm,right=20mm,top=25mm,bottom=30mm]{geometry}

 \textheight=21.5cm \textwidth=15cm
 \topmargin=-0.8cm
 \oddsidemargin=0.3cm \evensidemargin=0.3cm

\newenvironment{@abssec}[1]{%
    \if@twocolumn

      \section*{#1}%
    \else

      \vspace{.05in}\footnotesize
      \parindent .2in
 {\upshape\bfseries #1. }\ignorespaces
    \fi}

    {\if@twocolumn\else\par\vspace{.1in}\fi}

\newenvironment{keywords}{\begin{@abssec}{\keywordsname}}{\end{@abssec}}

\newenvironment{AMS}{\begin{@abssec}{\AMSname}}{\end{@abssec}}

\newcommand\keywordsname{Key words}
\newcommand\AMSname{AMS subject classifications}
\newcommand\AMname{AMS subject classification}
\newtheorem{theorem}{Theorem}
 \newtheorem{lemma}[theorem]{Lemma}

\title{Patterns with prescribed numbers of critical points on topological tori
\footnotetext[3]{This research was partially supported by the Grant-in-Aid
for Scientific Research (B) ($\sharp$ 18H01126) of
Japan Society for the Promotion of Science.}\footnotemark[3]}

\author{Putri Zahra Kamalia\footnote{Corresponding author.} \footnote{Research Center for Pure and Applied Mathematics, Graduate School of Information Sciences, Tohoku
University, Sendai, 980-8579, Japan ({\tt putrizahrakamalia@gmail.com}${}^*$, {\tt  sigersak@tohoku.ac.jp}).}\  \  and \  Shigeru Sakaguchi\footnotemark[2]} 

\date{}
\begin{document}

\maketitle

\begin{abstract}
We study the existence of critical points of stable stationary solutions to reaction-diffusion problems on topological tori. Stable nonconstant stationary solutions are often called patterns. 
We construct topological tori and patterns with prescribed numbers of critical points whose locations are explicit.
%We confirm an exact number of critical points of each pattern together with the exact locations of them on the topological tori.
\end{abstract}

\begin{keywords}
stable solution; pattern; semilinear elliptic equation; reaction-diffusion equation; standard torus; critical point 
\end{keywords}

\begin{AMS}
Primary 35B35; Secondary 35K57, 35K58, 35J61, 35P15, 35K15, 35K20, 35B20, 58J05

%35B35(Stability in context of PDEs) % 
\end{AMS}

%%%%%%%%%%%%%%%%%%%%%
%%%%%%%%%%%%%%%%%%%%%
%%%%%%  1  Introduction    %%%%%%
%%%%%%%%%%%%%%%%%%%%%
%%%%%%%%%%%%%%%%%%%%%
\section{Introduction}
Let $M$ be a topological torus equipped with a Riemannian metric $g$. For $u =u(x,t)$ on $M$, we consider the following reaction-diffusion problem
\begin{equation}\label{eq1}
\partial_t u= \Delta_{g} u + f(u)\quad \mbox{ in } M \times (0,\infty),
\end{equation}
where $f \in C^1(\mathbb{R})$  is a function of $u$  and $\Delta_g$  denotes  the Laplace-Beltrami operator on $M$,
\begin{equation}\label{gradu}
\Delta_{g} u = \textrm{div} (\nabla_{g} u)= \sum_{i=1}^2 \frac{1}{\sqrt{|g|}} \frac{\partial}{\partial x^i}  \left(\sqrt{|g|} (\nabla_{g} u)^i \right).
\end{equation}
A stationary solution $U$  of \eqref{eq1} is said to be stable in the sense of Lyapunov if for each $\epsilon > 0$, there exists a $\delta > 0$ such that, for every initial data $u_0$ with
$\Vert {u_0-U}\Vert_{\infty} < \delta$ we have
$
\Vert u(\cdot,t)-U\Vert_{\infty} < \epsilon\ \mbox{ for every } t > 0.
$
Throughout, we will refer to stable nonconstant stationary solutions as patterns. 

The existence and nonexistence of patterns on surfaces of revolution and more general compact $d$-dimensional Riemannian manifolds have been studied in \cite{bandle, farina, jimbo, nascimento, punzo, rubinstein, sonego}. Among these, in \cite[Theorem 2]{jimbo} Jimbo introduced manifolds and nonlinearities $f$ having complex patterns whose construction is analogous to that in \cite{matano}, and in \cite{bandle} Bandle, Punzo and Tesei constructed a class of surfaces of revolution with non-empty boundary and nonlinear terms $f$ having patterns with the Neumann boundary condition by solving some ordinary differential equations with the aid of an idea introduced by Yanagida in \cite{yanagida} to  construct the nonlinear terms $f$.  

In this paper, we study the stability of patterns of \eqref{eq1} where $M$ is a small perturbation $T^2_\epsilon$ of the standard tori $T^2$. Our purpose is to find complex patterns of \eqref{eq1} on topological tori with exact numbers of critical points. 

In our previous work \cite{kamal}, we constructed topological tori $M$ together with patterns on $M$ having at least $4n$ critical points for sufficiently large $n$.  In the beginning, we slightly perturb the surfaces of revolution $D$ in \cite{bandle} in such a way that each center curve of each new surface  $M_\kappa$ is just a circular arc with sufficiently small curvature $\kappa$, where $M_0=D$.
  The perturbed patterns  correspond to  the following reaction-diffusion problem with the Neumann boundary condition
\begin{equation}\label{rdn}
\begin{cases}
\ \ \partial_t u= \Delta_g u + f(u) &\mbox{ in } M_\kappa \times (0,\infty),\\
\ \ \dfrac{\partial u}{\partial \nu}=0 &\mbox{ on } \partial M_\kappa \times (0,\infty),
\end{cases}
\end{equation}
where $\nu$ denotes the outward unit normal vector to the boundary $\partial M_\kappa$.  With the aid of the implicit function theorem,  the patterns of \eqref{rdn} exist on $M_\kappa$ with no critical point in the interior of $M_\kappa$. Furthermore, we attach a sufficiently large even number $2n$ of copies of $M_\kappa$ together with the perturbed pattern on $M_\kappa$ to each other in such a way that the center curve of the new closed surface $M$ is just a whole circle with curvature $\kappa$.  As a consequence, the stationary solution of \eqref{eq1} on $M$ is constructed as  a symmetric function on $M$. The stability of the constructed solution on $M$ having at least $4n$ critical points follows from  the symmetry coming from the construction. We can only show that there exists at least two critical point on each boundary  $\partial M_\kappa$ in $M$ and hence the constructed pattern has at least $4n$ critical points.

 The objective of this paper is to show that patterns of \eqref{eq1} exist on topological tori  with an exact number $4n$ of critical points by introducing  another simple way to construct  topological tori together with its patterns. In \cite{kamal}  we start with a small piece of topological tori $M_\kappa$ and we then arrive at the whole tori $M$ by attaching a sufficiently large even number of copies of $M_\kappa$. On the other hand,  in this paper we directly perturb the standard tori $T^2$ to obtain new topological tori $T^2_\epsilon$ with a small parameter $\epsilon$, where $T^2_0=T^2$. Topologically, $T^2_\epsilon$  is the same as $M$ constructed in \cite{kamal}, but geometrically, they are different from each other. The reason why  we are able to give an exact number of critical points on $T^2_\epsilon$ is simply because we deal with the explicit perturbation $T^2_\epsilon$ of the standard torus $T^2$ in this paper. 
 
 %Since the Laplace-Beltrami operator involves a Riemannian metric that determines an inner product on each tangent space, it gives us less complex problem to analyze rather than the problem in \cite{kamal}.

%Since \cite{bandel} consider surfaces of revolution with boundary, 

We start with considering the upper half of a standard torus $T^2$. By some adjustment, using \cite[Theorem 4.1, p. 41]{bandle} yields patterns of problem \eqref{rdn} where $M_\kappa$ is replaced by  the upper half of $T^2$. Subsequently, we prove that patterns of \eqref{eq1} exist on $T^2$.  Next, we slightly perturb $T^2$  by simply changing the radius of the tube from a constant into a periodic function to obtain  new topological tori $T^2_\epsilon$ with a small parameter $\epsilon$. Then the patterns of \eqref{eq1} on $T^2$ together with the implicit function theorem yield patterns of \eqref{eq1} on $T^2_\epsilon$.  Moreover,  the stability of each pattern enables us to examine the critical points of the pattern in $T^2_\epsilon$.  We summarize our result in the following theorem. 
%We prove in this paper that the patterns of problem \eqref{rdn} still exist even if the domain $D$ dealt with in \cite{bandel} is slightly perturbed. The perturbation is done by deforming its axis of rotation into a circular arc. We start with the patterns constructed in \cite{bandel}  and examine the existence of patterns with the Neumann boundary condition on the perturbed domain by using the implicit function theorem. Furthermore, we manage to construct patterns together with surfaces of genus $1$  by attaching a finite number of copies of the perturbed domain together with the perturbed pattern. This construction comes from an idea of Below and Lubary in \cite[Theorem 4.2, p.177]{below}  which introduces patterns on a {\it disjoint union } of graphs. As a consequence of this construction, the Neumann boundary condition on the boundary of each copy of the perturbed domain yields the critical points of the resulting patterns. We summarize our result in the following theorem.
\begin{theorem}\label{mt}
	There exist a nonlinearity $f$ and a number $N \in \mathbb{N}$  such that, for each $n \ge N$,  a perturbation $M$ of a standard torus $T^2$ together with a pattern $U$ of \eqref{eq1}, is constructed in such a way that $U$ has exactly $4n$ critical points.
\end{theorem}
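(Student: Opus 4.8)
The plan is to build the whole construction from a single meridian profile. I would parametrize the standard torus $T^2$ by the longitude $\theta \in \mathbb{R}/2\pi\mathbb{Z}$ and the meridian angle $\phi$, with induced metric $ds^2 = r^2\,d\phi^2 + (R + r\cos\phi)^2\,d\theta^2$, so that the upper half $\{0 \le \phi \le \pi\}$ is a surface of revolution with two boundary circles, the outer equator $\phi = 0$ and the inner equator $\phi = \pi$. After rewriting the profile variable in arclength to match the hypotheses there, I would invoke \cite[Theorem 4.1, p. 41]{bandle} to produce a nonlinearity $f$ together with a stable, strictly monotone solution $U_0 = U_0(\phi)$ of the Neumann problem \eqref{rdn} on this half-torus, depending only on $\phi$; I would arrange the construction so that the linearization is strict (nondegenerate).

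Next I would reflect evenly in $\phi$. Because $U_0$ satisfies the Neumann condition at $\phi = 0,\pi$, the even extension is $C^1$ and hence a smooth $\theta$-independent stationary solution of \eqref{eq1} on all of $T^2$, whose gradient vanishes exactly on the two equator circles $\{\phi = 0\}$ and $\{\phi = \pi\}$, with $U_0'' \ne 0$ there. Stability on $T^2$ I would deduce from the one-dimensional stability: expanding a perturbation in Fourier modes $e^{ik\theta}$, the nonzero modes merely add the nonnegative term $k^2(R + r\cos\phi)^{-2}$ to the second-variation quadratic form, so the least stable mode is $k = 0$; and within $k = 0$, splitting a perturbation into parts even and odd in $\phi$ identifies the even part with the (stable) Neumann problem on the half and the odd part with the Dirichlet problem, whose principal eigenvalue is even larger. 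Thus $U_0$ is a stable, nondegenerate pattern on $T^2$.

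I would then perturb only the tube radius, replacing the constant $r$ by $r(1 + \epsilon\, a(\theta))$ with $a$ a fixed smooth function of period $2\pi/n$, producing the torus $M = T^2_\epsilon$ with $T^2_0 = T^2$ and a metric depending smoothly on $\epsilon$. Since $U_0$ is rotationally invariant, its orbit under the (now broken) rotation group is a single point, and since the linearized operator at $U_0$ is invertible, the implicit function theorem applied to the stationary equation yields a unique branch $U_\epsilon$ of stationary solutions with $U_\epsilon \to U_0$ in $C^2$; each $U_\epsilon$ remains stable and nonconstant, hence is a pattern.

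The heart of the argument is the exact count. Away from the two equators $|\partial_\phi U_0| > 0$, so $U_\epsilon$ has no critical points there once $\epsilon$ is small. Near each equator I would solve $\partial_\phi U_\epsilon(\phi,\theta) = 0$ for $\phi = \Phi_\epsilon(\theta)$, using $U_0'' \ne 0$, thereby reducing the critical points of $U_\epsilon$ to those of the single-variable function $h_\epsilon(\theta) := U_\epsilon(\Phi_\epsilon(\theta),\theta)$, for which $h_\epsilon'(\theta) = \partial_\theta U_\epsilon(\Phi_\epsilon(\theta),\theta)$. As $U_0$ is $\theta$-independent, $h_\epsilon' = \epsilon\, \partial_\theta V + o(\epsilon)$ along the equator, where $V = \partial_\epsilon U_\epsilon|_{\epsilon=0}$ solves the linearized equation with a source inherited from the radius perturbation and therefore carries the $n$-fold symmetry of $a$. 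The decisive and hardest point is to show that the relevant Fourier coefficient of this leading term does not vanish, so that after dividing by $\epsilon$ the function $h_\epsilon'$ is $C^1$-close to one with exactly $2n$ simple zeros and hence has exactly $2n$ of them; summing over the two equators gives precisely $4n$ nondegenerate critical points. Controlling the $o(\epsilon)$ remainder uniformly and securing the nonvanishing of that coefficient, which is where the threshold $N$ and the explicit choice of $a$ (for instance $a(\theta) = \cos(n\theta)$) enter, is the main obstacle.
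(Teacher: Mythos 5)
Your overall architecture is the same as the paper's (Bandle's theorem on the half-torus, even reflection to $T^2$, an $n$-periodic wobble of the tube radius, an implicit-function-theorem branch $U_\epsilon$, and a leading-order analysis of $V=\partial_\epsilon U^\epsilon\big|_{\epsilon=0}$), and several of your local variants are legitimate: proving stability on $T^2$ by Fourier modes in $\theta$ plus an even/odd (Neumann/Dirichlet) splitting is a valid alternative to the paper's argument via uniqueness of the normalized principal eigenfunction, and your reduction of the count to the persistence of the $2n$ simple zeros of $h_\epsilon'$ near each equator is a clean substitute for the paper's route, which instead pins the critical points at the exact locations $\theta_k=\frac{2k+1}{2n}\pi$ by reflection symmetries of $T^2_\epsilon$ and then uses $V_{\theta\theta}\neq 0$ there. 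But the proposal has a genuine gap, and you name it yourself: the nonvanishing of the leading Fourier coefficient is deferred as ``the main obstacle,'' and that is precisely the content of the hardest part of the paper's proof, not a routine verification. Concretely, the paper first shows that $\frac{V_{\theta\theta}}{n^2}+V$ solves the homogeneous linearized equation \eqref{egvalv} and hence vanishes identically because $\lambda_1>0$, giving $V=C_1(\varphi)\cos(n\theta)+C_2(\varphi)\sin(n\theta)$; the threshold $N$ is then chosen by \eqref{large n} so that $B(\varphi)=r^2\bigl[n^2(R+r\cos\varphi)^{-2}-f'(U)\bigr]>0$, which eliminates $C_1$ via the maximum principle. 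The decisive step, $C_2(0)\neq 0$ and $C_2(\pi)\neq 0$, is proved by an integral-identity contradiction: integrating $\bigl[(R+r\cos\varphi)C_2'\bigr]'=(R+r\cos\varphi)\bigl[B C_2+A\bigr]$ over $(0,\pi)$ with $C_2'(0)=C_2'(\pi)=0$, combining with $\int_0^\pi(R+r\cos\varphi)f(U)\,d\varphi=0$ (from the stationary ODE for $U$) and the strict monotonicity $U'>0$, one obtains $\int_0^\pi(R+r\cos\varphi)B(\varphi)C_2(\varphi)\,d\varphi<0$ as in \eqref{negativity of the important integral}; if $C_2(0)=0$, then $C_2''(0)=A(0)=-2rf(U(0))>0$ by \eqref{2nd derivatives} forces $C_2>0$ up to its first critical point $\delta_*$, and repeating the integration on $(0,\delta_*)$ contradicts the negativity of that integral. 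This is where the largeness of $n$, the signs $f(U(0))<0<f(U(\pi))$, and the monotonicity of $U$ all interact, and none of it appears in your proposal.

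Two smaller points. First, you assert $U_0''\neq 0$ at the two equators without justification; this is not free, and the paper derives it (namely $U_{\varphi\varphi}(0)>0>U_{\varphi\varphi}(\pi)$, i.e.\ \eqref{2nd derivatives}) from the unique solvability of the Cauchy problem for the ODE \eqref{ode for U}: if $f(U(0))=0$ held along with $U_\varphi(0)=U_{\varphi\varphi}(0)=0$, then $U$ would be constant. This nondegeneracy is also what confines all critical points of $U^\epsilon$ to the sets $\{\varphi=0\}$ and $\{\varphi=\pi\}$ for small $\epsilon$, which your transversality reduction needs. Second, a remark in your favor: the elimination of the $\cos(n\theta)$ mode $C_1$ actually follows for every $n$ from the invertibility of $\Delta_g+f'(U)$ (any nontrivial periodic solution of \eqref{ode1} would produce a kernel element), so the restriction $n\ge N$ is genuinely needed only for the sign argument showing $C_2(0)\neq 0\neq C_2(\pi)$; but since that argument is exactly the missing piece, the gap remains.
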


We organize the paper as follows. In section 2, we introduce patterns on standard tori with the aid of a result in \cite{bandle}.  In section 3, we start with the construction of perturbed tori and give a proof of Theorem \ref{mt}. We also mention the exact locations of the critical points of the patterns.

\setcounter{equation}{0}
\setcounter{theorem}{0}
%%%%%%%%%%%%%%%%%%%%%
%%%%%%%%%%%%%%%%%%%%%
%%%%%%  2   Preliminary      %%%%%
%%%%%%%%%%%%%%%%%%%%%
%%%%%%%%%%%%%%%%%%%%%
\section{Standard tori}
Let  $T^2$ be a standard torus properly embedded in $\mathbb R^3$ and parameterized by 
\begin{equation}  \label{param}
\begin{cases}
\ \ x_1=(R+r \cos\varphi ) \cos \theta , \\
\ \ x_2=(R+r \cos\varphi ) \sin \theta,  \hspace{1cm} ( ( \varphi,\theta) \in S^1 \times S^1)\\ 
\ \ x_3=r \sin \varphi,
\end{cases}
\end{equation}
where $R, r$ are constants and $R>r>0$. 
Set $x^1=\varphi, x^2=\theta$.  Then $T^2$ is a 2-dimensional Riemannian manifold with metric $ds^2$ and area element $d\sigma$ given by
\begin{equation*}
\begin{cases}
\ \ ds^2=\sum_{i,j=1}^{2} g_{ij}dx^idx^j=r^2 d\varphi^2+ (R+r\cos \varphi)^2 d\theta^2,\\
\ \ d\sigma=\sqrt{|g|} d\varphi d\theta=r(R+r\cos \varphi)   d\varphi d\theta.
\end{cases}
\end{equation*}
The Riemannian gradient $\nabla_g u$ of $u$ with respect to $g$ on $T^2$ is  given by
 \begin{equation*}
 \nabla_{g} u =
 \begin{pmatrix}
\displaystyle \dfrac{1}{r^2} \partial_\varphi u \\[0.5cm]
 \dfrac{1}{(R+r\cos \varphi)^2}  \partial_\theta u
 \end{pmatrix},
 \end{equation*}
and the Laplace-Beltrami operator $\Delta_g$ on $T^2$ is expressed as
 \begin{equation}\label{Laplace operator for torus}
 \Delta_{g} u= \frac{1}{r^2} u_{\varphi\varphi}+\frac{1}{(R+r\cos \varphi)^2} u_{\theta\theta} - \frac{\sin \varphi}{r(R+r \cos \varphi)} u_{\varphi}.
 \end{equation}
 Geometrically,  $T^2$  is a surface of revolution obtained by revolving a circle with radius $r$ with center $(R,0,0)$ in $x_1x_3$ plane about the $x_3$-axis. 
 
 For a surface of revolution $D$,  in \cite{bandle} Bandle et al. studied the existence and nonexistence of patterns of the reaction-diffusion problem on $D$ with the Neumann boundary condition
\begin{equation}\label{rdnBandle}
\begin{cases}
\ \ \partial_t u= \Delta_g u + f(u) &\mbox{ in } D \times (0,\infty),\\
\ \ \dfrac{\partial u}{\partial \nu}=0 &\mbox{ on } \partial D \times (0,\infty),
\end{cases}
\end{equation}
where $\nu$ denotes the outward unit normal vector to $\partial D$. 

 Consider  the eigenvalue problem linearized at a stationary solution $U$ of  \eqref{eq1} (or \eqref{rdnBandle}):
\begin{equation}\label{egvals}
\begin{cases}
\ \ \Delta_{g} \phi+f^\prime(U) \phi=-\lambda \phi \mbox{ in } M\ (\mbox{or } D),\\
\ \ \left(\dfrac{\partial \phi}{\partial \nu}=0 \mbox{ on } \partial D \mbox{ for problem \eqref{rdnBandle}}\right).
\end{cases}
\end{equation}
% \begin{equation}
% %\begin{cases}
% \Delta_{g} \phi+f^\prime(U) \phi=-\lambda\phi \mbox{ in }\ T^2,\\ 
% %\dfrac {\partial \phi}{\partial\nu}=0 &\mbox{ on }\  \partial T^*
% %\end{cases}
% \end{equation}
The Rayleigh quotient of problem \eqref{egvals} can be taken on in terms of the principal eigenvalue
\begin{equation} \label{ev}
\lambda_1=  \inf_{\substack{\phi \neq 0\\\phi \in H^1(M) (\mbox{or }H^1(D))}} \dfrac{ \int\limits_{M (\mbox{or } D)} \left( |\nabla_{g} \phi|^2 - f^\prime (U)\phi^2 \right) d\sigma}{\int\limits _{ M (\mbox{or } D)} \phi^2 d\sigma}.
\end{equation}
If $\phi_1$ is the normalized eigenfunction that corresponds to the principal eigenvalue $\lambda_1$, then it satisfies
\begin{align}
\begin{cases}
\ \ \Delta_{g} \phi_1+f^\prime(U) \phi_1=-\lambda_1\phi_1\ \mbox{ in }\, M (\mbox{or } D),\\ 
\ \ \left(\dfrac{\partial \phi_1}{\partial \nu}=0 \mbox{ on } \partial D \mbox{ for problem \eqref{rdnBandle}}\right),\\
\ \ \|\phi_1\|_{L^2(M) (\mbox{or } L^2(D))}=1, \quad \phi_1>0\ \mbox{ in }\, M (\mbox{or } D),\\
\ \ \lambda_1=\int\limits_{M (\mbox{or } D)} \left( |\nabla_{g} \phi_1|^2 - f^\prime (U)\phi_1^2 \right) d\sigma
\end{cases}
\end{align}
Note that the normalized eigenfunction that corresponds to the principal eigenvalue $\lambda_1$ is uniquely determined.
It is well known that the sign of the principal eigenvalue determines the stability of a stationary solution, with the following stability criterion (see \cite{eigen} for instance):
\begin{itemize}
	\item $U$ is stable, if $\lambda_1 > 0$,
	\item $U$ is unstable, if $\lambda_1 < 0$,
	\item The stability of $U$ is undetermined, if $\lambda_1 =0$.
\end{itemize}

In particular, we suppose that $D$ is parameterized by 
\begin{equation}
\begin{cases}
\ \ x_1=\psi(\rho) \cos \theta, \\
\ \ x_2=\psi(\rho) \sin \theta,   \qquad \left( (\rho, \theta) \in [0,L]\times [0,2\pi) \right)\\ 
\ \ x_3=\chi(\rho),
\end{cases}
\end{equation}
where $L > 0$ is a constant and $\psi, \chi \in C^3([0,L])$ satisfy that 
$$
\psi>0\ \mbox{ and }\ (\psi^\prime )^2+(\chi^\prime)^2=1\ \mbox{ on }\ [0,L]. 
$$
Recall  a theorem of \cite{bandle} which states
\begin{theorem}{\rm(\cite[Theorem 4.1, p. 41]{bandle})}
 \label{main1}
Suppose that for some $\rho_0 \in (0,L)$
\begin{equation}\label{stas}
\left(\frac{\psi^\prime}{\psi}\right)^\prime> 0 \text{ at } \rho=\rho_0.
\end{equation}
Then, there exists $f\in C^1 (\mathbb{R})$ such that problem \eqref{rdnBandle} admits a pattern $Z=Z(\rho)$, where the principal eigenvalue $\lambda_1$ of the eigenvalue problem linearized  at $Z$
is positive.
\end{theorem}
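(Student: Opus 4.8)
The plan is to reduce the problem to a one-dimensional (radial) question and then to build $f$ by prescribing the profile $Z$. With $x^1=\rho,\ x^2=\theta$ and the arclength normalization $(\psi')^2+(\chi')^2=1$, the metric is $ds^2=d\rho^2+\psi(\rho)^2\,d\theta^2$ and $\sqrt{|g|}=\psi$, so for a function depending only on $\rho$ one has $\Delta_g u=u''+a(\rho)u'$ with $a:=\psi'/\psi$. Hence a radial stationary solution $Z=Z(\rho)$ solves the ODE
\[
Z''+a(\rho)Z'+f(Z)=0 \quad\text{on }(0,L),\qquad Z'(0)=Z'(L)=0 .
\]
Separating variables in \eqref{egvals} as $\phi=\phi_k(\rho)e^{ik\theta}$ produces, for the mode $k$, the operator $\phi_k\mapsto \frac1\psi(\psi\phi_k')'+f'(Z)\phi_k-\tfrac{k^2}{\psi^2}\phi_k$; since the term $-k^2/\psi^2\le 0$ only raises the spectrum, the overall principal eigenvalue $\lambda_1$ coincides with that of the radial ($k=0$) operator. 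Thus it suffices to produce a nonconstant $Z$ for which the radial Neumann problem
\[
\mathcal L\phi:=\phi''+a(\rho)\phi'+f'(Z)\phi=-\lambda\phi,\qquad \phi'(0)=\phi'(L)=0,
\]
has $\lambda_1>0$.

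Next I would construct $f$ by Yanagida's inverse method. Fix a strictly monotone, $C^3$ profile $Z$ on $[0,L]$ with $Z'(0)=Z'(L)=0$ and with $Z'$ sharply concentrated near $\rho=\rho_0$, so that $Z$ is a single transition layer centred at $\rho_0$. Since $Z$ is invertible, setting
\[
f\big(Z(\rho)\big):=-Z''(\rho)-a(\rho)Z'(\rho),\qquad \rho\in[0,L],
\]
defines $f$ on the range of $Z$, and one extends it to a $C^1$ function on $\mathbb R$, arranging $f$ to be of bistable type with the two endpoint values $Z(0),Z(L)$ as stable zeros. By construction $Z$ solves the stationary ODE and satisfies the Neumann conditions, so $Z$ is a genuine nonconstant stationary solution of \eqref{rdnBandle}; it remains only to check its stability.

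The stability is where the hypothesis \eqref{stas} enters. Differentiating the stationary ODE in $\rho$ gives the identity
\[
\mathcal L(Z')=-a'(\rho)\,Z',\qquad a'=\Big(\tfrac{\psi'}{\psi}\Big)' ,
\]
so the translation mode $Z'$, which is positive on $(0,L)$ and vanishes at the endpoints, is an approximate ground state of $\mathcal L$ whose associated Rayleigh quotient in \eqref{ev}, after integrating out $\theta$ and one integration by parts using $Z'(0)=Z'(L)=0$, simplifies to
\[
\frac{\int_0^L\big((Z'')^2-f'(Z)(Z')^2\big)\psi\,d\rho}{\int_0^L (Z')^2\psi\,d\rho}
=\frac{\int_0^L a'(\rho)(Z')^2\psi\,d\rho}{\int_0^L (Z')^2\psi\,d\rho}.
\]
Because $Z'$ concentrates near $\rho_0$, the right-hand side tends to $a'(\rho_0)>0$ as the layer sharpens. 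The geometric meaning is transparent: a point with $(\psi'/\psi)'>0$ is a ``neck'' of the surface of revolution, and a transition layer pinned at a neck is expected to be stable.

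The main obstacle is to turn this heuristic into a genuine lower bound $\lambda_1>0$, since the displayed quotient only bounds $\lambda_1$ from above. I would handle it by a singular-perturbation analysis of $\mathcal L$ in the sharp-layer regime: the inner operator obtained by rescaling around $\rho_0$ is the linearization of the one-dimensional bistable ODE about its monotone heteroclinic connection, which is nonnegative, with a one-dimensional kernel spanned by the positive translation mode and the remainder of its spectrum bounded away from $0$. Hence the smallest eigenvalue of $\mathcal L$ is a small perturbation of this kernel, and a Lyapunov--Schmidt reduction identifies it to leading order with the quotient above, namely $a'(\rho_0)+o(1)>0$; the modes $k\neq 0$ are controlled by the same gap together with the stabilizing term $-k^2/\psi^2$. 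Controlling the error away from the layer and verifying that $Z'$ genuinely approximates the ground state (rather than a higher mode) is the technical heart of the argument. Once $\lambda_1>0$ is established, the stability criterion following \eqref{ev} shows that $Z$ is a pattern, completing the proof.
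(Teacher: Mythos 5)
Your overall skeleton coincides with the proof of this theorem in \cite{bandle} (which the paper imports rather than reproves): the inverse definition $f(Z(\rho)):=-Z''-(\psi'/\psi)Z'=-(\psi Z')'/\psi$ is literally \eqref{f}, i.e.\ Yanagida's method from \cite{yanagida}; the Fourier reduction to the radial mode is correct, since the $k$-mode form adds the nonnegative term $k^2/\psi^2$; and the identity $\mathcal L(Z')=-(\psi'/\psi)'Z'$ together with your integration by parts (the boundary terms vanish because $Z'(0)=Z'(L)=0$) is valid, as is your honest remark that the resulting Rayleigh quotient only bounds $\lambda_1$ from above.

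There are, however, two genuine gaps. First, your plan to extend $f$ ``of bistable type with the two endpoint values $Z(0),Z(L)$ as stable zeros'' is impossible: since $\psi>0$ on $[0,L]$, the coefficient $\psi'/\psi$ is regular, so if $f(Z(0))=0$ and $Z'(0)=0$, uniqueness for the Cauchy problem of $Z''+(\psi'/\psi)Z'+f(Z)=0$ forces $Z\equiv Z(0)$, contradicting nonconstancy; the present paper derives exactly this, namely $f(U(0))<0<f(U(\pi))$, in \eqref{2nd derivatives}. This also means the $C^1$-regularity of $f$ at the values $Z(0),Z(L)$ --- where $Z'$ vanishes and $Z^{-1}$ fails to be Lipschitz --- is a nontrivial constraint on the profile that you never verify, and it deforms your inner-layer picture (the heteroclinic cannot connect the endpoint values themselves). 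Second, and decisively, the lower bound $\lambda_1>0$ is only a program in your write-up: ``singular-perturbation analysis,'' ``Lyapunov--Schmidt reduction,'' and ``controlling the error away from the layer'' are named, not executed, and you yourself call this the technical heart. The argument in \cite{bandle} closes this step elementarily, without any such machinery: substituting $\phi=Z'v$ into the quadratic form of \eqref{ev} (after the radial reduction) and using your identity for $\mathcal L(Z')$ gives
\begin{equation*}
\int_0^L \psi\bigl((\phi')^2-f'(Z)\phi^2\bigr)\,d\rho=\int_0^L \psi\,(Z')^2 (v')^2\,d\rho+\int_0^L \psi\,\Bigl(\frac{\psi'}{\psi}\Bigr)'(Z')^2 v^2\,d\rho,
\end{equation*}
after which an explicit ODE construction of $Z$ (this is what \cite[(4.8),(4.9)]{bandle} do), with $Z'$ concentrated in a neighborhood of $\rho_0$ where \eqref{stas} persists by continuity, reduces positivity to a direct weighted estimate: localizing $v$ where $(\psi'/\psi)'<0$ is prohibitively expensive against the rapidly decaying weight $\psi(Z')^2$. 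As written, your proposal establishes existence of a nonconstant stationary solution of \eqref{rdnBandle} but not the stability assertion, which is the content of the theorem.
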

The pattern $Z=Z(\rho)$ of \eqref{rdnBandle} on $D$ is a positive function of one variable $\rho$ (see  \cite[(4.8),(4.9), p.43 and Proposition 3.2., p. 39]{bandle}).
The nonlinear term $f=f(Z)$ changes its sign and is defined by \cite[(4.10), p. 43]{bandle} in such a way that for any $\rho \in(0,L)$
\begin{equation}\label{f}
f[Z(\rho)]= -\dfrac{(\psi Z^\prime)^\prime}{\psi}(\rho).
\end{equation}

 Let us set 
$$
\rho=r\varphi, \ L=r\pi,\  \psi(\rho)=R+r\cos(r^{-1}\rho)\  \mbox{ and }\chi(\rho)= r \sin(r^{-1}\rho).
$$
%\end{gather}
Then $D$ corresponds to $T^+,$ the upper half of the standard torus $T^2$.   
\begin{figure}[h!]
	\centering
	\includegraphics[width=0.5\textwidth]{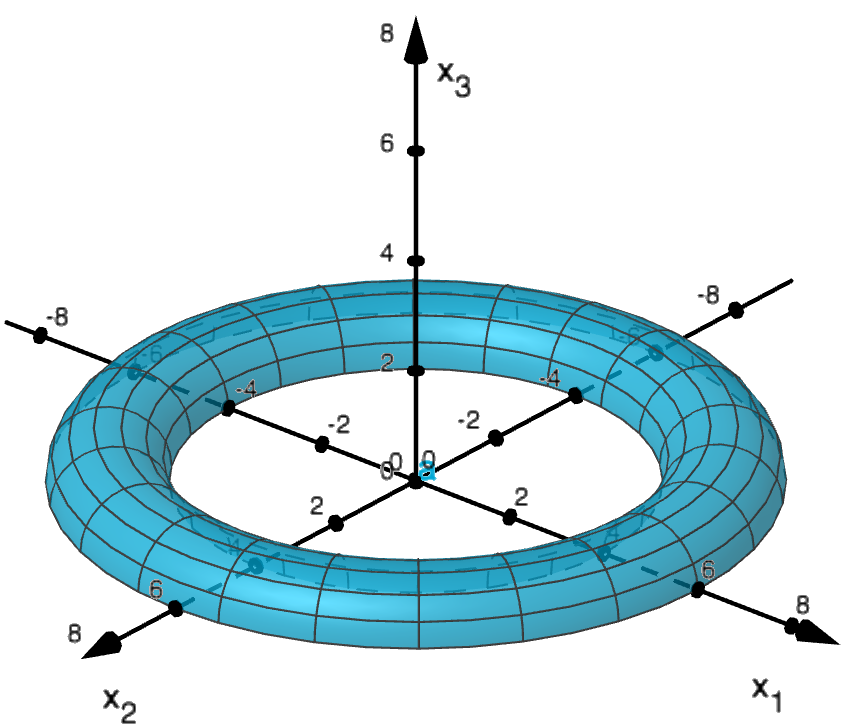}
	\caption{the upper half $T^+$ of $T^2$ with $R=5$ and $r=1$}
	%red line is a maximum of $U^*$ and yellow line is a minimum of $U^*$.}
	\label{tree}
\end{figure}
If $\varphi \in \left(0, \pi\right)$ is sufficiently close to $\pi$, then it follows that
$$
\left(\dfrac{\psi^\prime}{\psi}\right)^\prime = -\dfrac{r+R \cos \varphi}{r(R +r\cos \varphi)^2} > 0,
$$  
which  guarantees  \eqref{stas}.  Hence,  by Theorem \ref{main1}, a pattern $Z(\rho)=U^*(\varphi)$ of  \eqref{rdnBandle}  exists on $T^+$ for $f$ given by \eqref{f}. Moreover, we have from \cite[proof of Lemma 4.4, pp. 43--44]{bandle} that
\begin{enumerate}[label=(\alph*)]
	\item   $U^*(\varphi)$  is positive in $(0, \pi]$;
	\item \label{inc} $U^*(\varphi)$ is strictly increasing in $(0,\pi)$; 
	\item  $U^*(\varphi)$ has no critical points in the interior of $T^+$.
\end{enumerate}
We add one remark in order to apply Theorem \ref{main1}(\cite[Theorem 4.1, p. 41]{bandle}) to our surface of revolution $T^+$. Bandle et al. assume another condition \cite[(2.13), p. 37]{bandle} which is not satisfied for our $T^+$, but the following condition holds true for $T^+$
$$
\frac {\partial}{\partial\nu}=\frac {\partial}{\partial\rho} \mbox{ on } \{\varphi=\pi\}\ \mbox{ and }\ \frac {\partial}{\partial\nu}=-\frac {\partial}{\partial\rho} \mbox{ on } \{\varphi=0\},
$$ 
as they mention it at \cite[just after (2.13), p. 37]{bandle} and use it at \cite[line 14, p. 44]{bandle}. Thus, we are able to apply Theorem \ref{main1} to our $T^+$.

Consider  the eigenvalue problem \eqref{egvals} linearized at the stationary solution $U^*$ of   \eqref{rdnBandle} for $D=T^+$, and let $\phi^*_1$ be 
the normalized eigenfunction that corresponds to the principal eigenvalue $\lambda_1$.  Since $U^*$ is a function of one variable $\varphi$ and the normalized eigenfunction is uniquely determined, we see that $\phi^*_1$ is also a function of one variable $\varphi$.
% from the pattern $U^*(\varphi)$ on $T^*$  is positive and strictly increasing in $(0,\pi)$ and

\subsection{Patterns on standard tori}

Let $T^-$ be the lower half of $T^2$. Then 
$$
\partial T^+ =\partial T^-= T^2\cap\{x_3=0\}\ \mbox{ and }\ T^2=T^+\cup\partial T^+\cup T^-.  
$$
Moreover, $\partial T^+$ consists of  the two horizontal circles $C_{max}, C_{min}$ whose  radii are the maximum $R+r$ and the minimum $R-r$,  respectively, and hence
$$
T^2\cap\{x_3=0\} = C_{max}\cup C_{min}.
$$

\begin{theorem}\label{t2}
	For the standard torus $M=T^2$, there exists a nonlinearity $f$ together with a pattern $U$ of \eqref{eq1} with $\lambda_1 > 0$ such that the set of  critical points of $U$ equals $C_{max}\cup C_{min}$.
\end{theorem}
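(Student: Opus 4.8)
The plan is to build $U$ on $T^2$ by reflecting the half-torus pattern $U^*$ across the two boundary circles. First I would observe that the map $\sigma(\varphi,\theta)=(-\varphi,\theta)$, the restriction to $T^2$ of the Euclidean reflection $(x_1,x_2,x_3)\mapsto(x_1,x_2,-x_3)$, is an isometry of $(T^2,g)$ whose fixed-point set is exactly $\{\varphi=0\}\cup\{\varphi=\pi\}=C_{max}\cup C_{min}$. Since isometries commute with $\Delta_g$, and since the coefficient $\tfrac{\sin\varphi}{r(R+r\cos\varphi)}$ of the first-order term in \eqref{Laplace operator for torus} is odd in $\varphi$ while $(R+r\cos\varphi)$ is even, the expression $\Delta_g$ applied to any function of $\varphi$ alone is invariant under $\varphi\mapsto-\varphi$. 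I would then define $U(\varphi,\theta)=U^*(|\varphi|)$ for $\varphi\in[-\pi,\pi]$ and keep the same nonlinearity $f$ furnished through \eqref{f} by Theorem \ref{main1}.

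Next I would verify that $U$ is a classical stationary solution of \eqref{eq1} on the whole torus. On each open half it solves $\Delta_g U+f(U)=0$ by the reflection invariance just noted. Across $C_{max}$ and $C_{min}$ the Neumann conditions $\partial U^*/\partial\nu=0$ at $\varphi=0,\pi$ force $U\in C^1$, and because the extension is even its second $\varphi$-derivative is continuous there (the first-order coefficient of $\Delta_g$ vanishes at $\varphi=0,\pi$ because $\sin\varphi=0$), so $U\in C^2(T^2)$ and the equation holds classically everywhere. The critical-point count is then immediate: since $U$ depends only on $\varphi$, $\nabla_g U=(r^{-2}U_\varphi,0)$ vanishes exactly where $U_\varphi=0$, and the stated properties of $U^*$ give $U_\varphi=(U^*)'(\varphi)>0$ on $(0,\pi)$ (strict monotonicity \ref{inc} together with the absence of interior critical points) and $U_\varphi=-(U^*)'(-\varphi)<0$ on $(-\pi,0)$; hence the zero set of $U_\varphi$ is precisely $\{\varphi=0\}\cup\{\varphi=\pi\}=C_{max}\cup C_{min}$. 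In particular $U$ is nonconstant.

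For stability I would apply the same reflection to the principal eigenfunction. The normalized eigenfunction $\phi_1^*$ on $T^+$ is a positive function of $\varphi$ alone satisfying the Neumann condition, so its even extension $\phi_1(\varphi)=\phi_1^*(|\varphi|)$ is a positive $C^2$ function on $T^2$ solving $\Delta_g\phi_1+f'(U)\phi_1=-\lambda_1^{+}\phi_1$, where $\lambda_1^{+}>0$ denotes the half-torus principal eigenvalue from Theorem \ref{main1}. On a closed manifold the principal eigenvalue of $\Delta_g+f'(U)$ is the unique one admitting a positive eigenfunction; hence the principal eigenvalue $\lambda_1$ of the full-torus problem \eqref{egvals} equals $\lambda_1^{+}>0$, and $U$ is a pattern.

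I expect the only delicate point to be the regularity of the reflected solution across $C_{max}$ and $C_{min}$: one must check that the Neumann data make $U$ (and $\phi_1$) genuine $C^2$ solutions rather than merely continuous gluings, and that the first-order term of $\Delta_g$ causes no trouble — this is exactly where the vanishing of $\sin\varphi$ on the two circles and the odd/even structure of the coefficients of \eqref{Laplace operator for torus} are used. Once this reflection principle is in place, both the exact localization of the critical set and the transfer of positivity of the principal eigenvalue follow cleanly.
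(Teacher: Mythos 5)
Your proposal is correct and takes essentially the same route as the paper: you construct $U$ by even reflection of $U^*$ across $C_{max}\cup C_{min}$, read off the exact critical set from the strict monotonicity of $U^*$, and transfer stability by reflecting the principal eigenfunction $\phi_1^*$ to the closed torus. Your appeal to the fact that on a closed manifold only the principal eigenvalue of $\Delta_g+f'(U)$ admits a positive eigenfunction is just a more explicit version of the paper's invocation of the uniqueness of the normalized positive principal eigenfunction, and your extra care about $C^2$-regularity of the even extension fills in a detail the paper leaves implicit.
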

\begin{proof} Since $U^*$ satisfies the Neumann boundary condition on $\partial T^+$, we can define a stationary solution $U=U(\varphi)$ of \eqref{eq1} for  $M=T^2$ by
\begin{equation}\label{sym}
U(\varphi)=\left\{ \begin{array}{ll}
    U^*(\varphi) &\mbox{ if }\ \varphi \in [0,\pi],\\
    U^*(2\pi-\varphi)  & \mbox{ if }\ \varphi \in (\pi,2\pi).
  \end{array} \right.
  \end{equation}
  %\end{proof}
 %The following construction is done to clear the way to prove Theorem \ref{t2}. Notice that a standard torus can be formed by attaching an upper half of torus, $T^+$ with a bottom half of torus $T^-$. To obtain $T^-$ , we can use the parametric equations \eqref{param} for $(\varphi,\theta)\in (\pi,2\pi) \times S^1$.
 %.One of $T^+$ can  be represented as a bottom half of torus, $T^-$,. 
 %Then, attach the component of boundary of  $T^+$ to that of $T^-$ in such a way that a new surface is a standard torus  $T^2:=\overline{T^+}\cup \overline{T^-}$.
% Let us take two copies of $T^+$. We may assume that one of  is a bottom half of torus $T^2$, $T^-$. Then, attach the component of boundary of  $T^+$ to that of $T^-$ in such way that the new surface is a standard torus, $T^2=T^+\cup T^-$.
%\begin{figure}[h!]
%	\centering
%	\includegraphics[width=0.7\textwidth]{fig1}
%	\caption{a upper half of $T^2$ with $R=5$ and $r=1$.}
%	\label{tree}
%\end{figure}
%Hence, we may define the stationary solution of \eqref{eq1} on $T^2$ by
%\begin{equation}\label{sym}
%U(\varphi)=U^*(\varphi) \text{ for } \varphi \in T^+
%\end{equation}
%since  $T^+$ satisfies the Neumann boundary condition on $\partial T^+$. Without loss of  generality, throughout one can replace $T^+$ by $T^-$ on \eqref{sym}, since both of them are the same geometrically. 
Observe that
\begin{enumerate}[label=(\roman*),font=\upshape]
	\item \label{ss}  $U$ is symmetric with respect to each component of  $T^2\cap\{x_3=0\}$;
	\item $U(\varphi)$  is positive in $(0,2\pi)$;
	\item $U(\varphi)$ is strictly increasing in $(0,\pi)$ and strictly decreasing  in $(\pi,2\pi)$;
	\item \label{inf}  The set of  critical points of $U$ equals $T^2\cap\{x_3=0\}$, and $U$ achieves its positive maximum on $C_{min}$ and zero minimum on $C_{max}$. 
\end{enumerate}
Thus, it suffices to prove
\begin{lemma}\label{stab}
	The principal eigenvalue $\lambda_1$ of the eigenvalue problem linearized  at $U$ is positive and hence $U$ is stable.
\end{lemma}
%The proof of Lemma \eqref{stab} is adapted from \cite[Lemma 4.6]{kamal} for similar problem on topological tori.
\noindent
{\it Proof. } Consider the eigenvalue problem \eqref{egvals} linearized at the stationary solution $U$ of \eqref{eq1} for $M=T^2$. We may define the normalized eigenfunction $\phi_1=\phi_1(\varphi)$, that corresponds to the same principal eigenvalue $\lambda_1 (>0)$ as for $T^+$, by
\begin{equation}\label{sym_eigenfunction}
\phi_1(\varphi)=\left\{ \begin{array}{ll}
    \frac 1{\sqrt{2}}\phi_1^*(\varphi) &\mbox{ if }\ \varphi \in [0,\pi],\\
     \frac 1{\sqrt{2}}\phi_1^*(2\pi-\varphi)  & \mbox{ if }\ \varphi \in (\pi,2\pi).
  \end{array} \right.
  \end{equation}
  By the uniqueness of the normalized eigenfunction  that corresponds to the principal eigenvalue, this $\phi_1$ is exactly the eigenfunction we want and hence the principal eigenvalue $\lambda_1$ is positive.
\end{proof}

\setcounter{equation}{0}
\setcounter{theorem}{0}

\section{Standard tori with perturbation}
In this section we will slightly perturb $T^2$ and analyze its stability along with the existence of critical points.
 Let $T^2_\epsilon$ denote a perturbation of  $T^2$  where $T^2_0=T^2$. To be precise, $T^2_\epsilon$ is parameterized by
 \begin{equation} \label{parametrization of perturbed tori}
\begin{cases}
\ \ x_1=(R+r_\epsilon(\theta) \cos\varphi ) \cos \theta , \\
\ \ x_2=(R+r_\epsilon(\theta) \cos\varphi ) \sin \theta,  \hspace{1cm} ( ( \varphi,\theta) \in I:=S^1 \times S^1)\\ 
\ \ x_3=r_\epsilon(\theta) \sin \varphi,
\end{cases}
\end{equation}
where  $n\in \mathbb{N}$, $r_\epsilon(\theta)=r+\epsilon \sin (n\theta)$,  and the constants $R, r, \varepsilon$ satisfy $R> r+|\epsilon|=\max\limits_{\theta \in S^1}  r_\epsilon(\theta)$.
\begin{figure}[h!]
	\centering
	\includegraphics[width=0.5\textwidth]{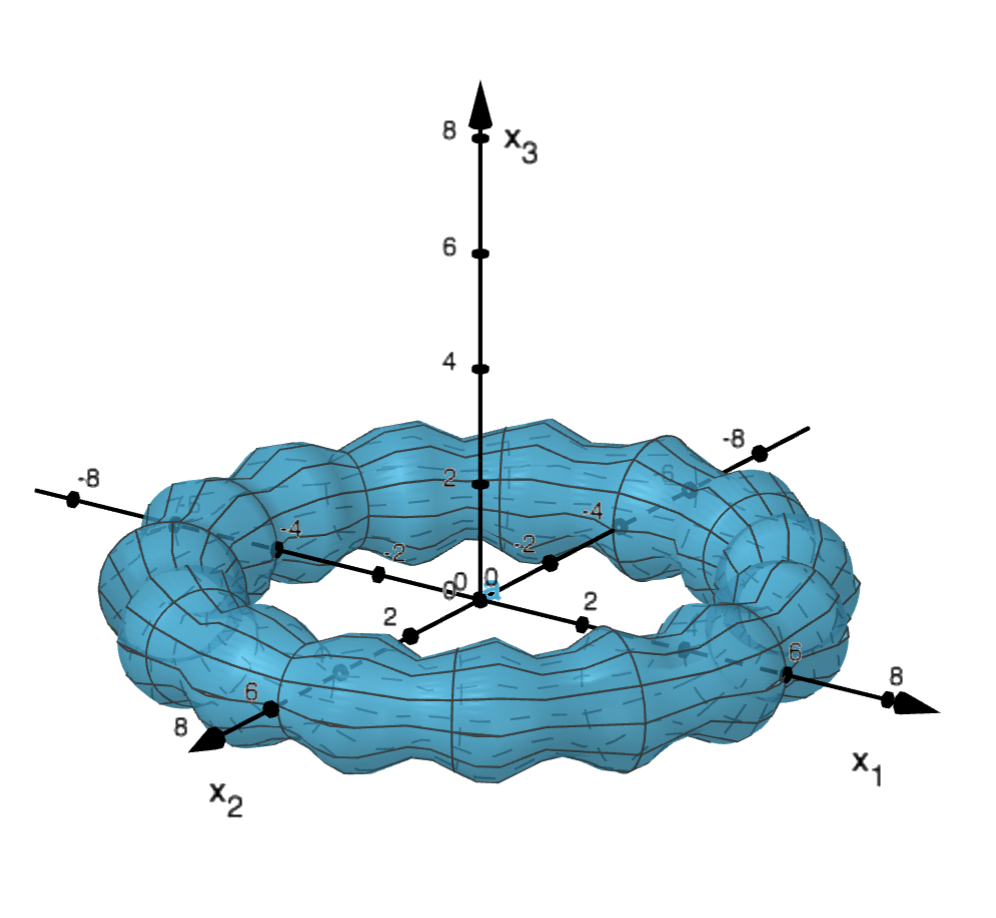}
	\caption{$T^2_\epsilon$ with $R=5, \epsilon=0.2, \text{ and }n=15$}
\end{figure}
%\begin{figure}[h!]
%	\centering
%	\includegraphics[width=0.5\textwidth]{nn15}
%	\caption{$T^\epsilon$ with $R=5, \epsilon=0.2, \text{ and }n=15$}
%	\end{figure}
Set $x^1=\varphi, x^2=\theta$.  Then
the corresponding Riemannian metric $ds_\epsilon^2$ and area element  $d\sigma^\epsilon$ for $T^2_\epsilon$ are given by
%\begin{align*}
%({g^\epsilon}_{ij})_{i,j=1,2}=
%\begin{pmatrix}
%1 & 0 \\
%0 & \psi^2(r)
%\end{pmatrix},
%\end{align*}
\begin{equation*}
\begin{cases}
\ \ ds_\epsilon^2=\sum_{i,j=1}^{2} g^\epsilon_{ij}dx^idx^j=r_\epsilon^2(\theta) d\varphi^2+ \left[ (R+r_\epsilon(\theta)\cos \varphi)^2+(r_\epsilon^\prime(\theta))^2 \right] d\theta^2,\\
\ \ d\sigma^\epsilon=\sqrt{|g^\epsilon|} d\varphi d\theta=r_\epsilon(\theta) \sqrt{ (R+r_\epsilon(\theta)\cos \varphi)^2+(r_\epsilon^\prime(\theta))^2 }   d\varphi d\theta.
\end{cases} 
\end{equation*}
The Riemannian gradient $\Delta_{g^\epsilon}u$ of $u$ with respect to $g^\epsilon$ on $T^2_\epsilon$ 
%in local coordinates $(\varphi,\theta) \in S^1 \times S^1$ 
is given by
 \begin{equation}\label{gradient}
\nabla_{g^\epsilon} u =
\begin{pmatrix}
\displaystyle \dfrac{1}{r_\epsilon^2 (\theta)} \partial_\varphi u \\[0.5cm]
\dfrac{1}{(R+r_\epsilon(\theta)\cos \varphi)^2+(r_\epsilon^\prime(\theta))^2}  \partial_\theta u
\end{pmatrix},
\end{equation}
and the Laplace-Beltrami operator $\Delta_{g^\epsilon}$ on $T^*$ is expressed as
\begin{equation}\label{LB}
\Delta_{g^\epsilon} u=  \frac 1{r_\epsilon^2(\theta)}u_{\varphi\varphi} +\frac 1{\Phi^2}u_{\theta\theta} + \frac{\Phi_\varphi}{r_\epsilon^2(\theta) \Phi}u_\varphi+\frac{r_\epsilon^\prime(\theta)\Phi-r_\epsilon(\theta)\Phi_\theta}{r_\epsilon(\theta)\Phi^3}u_\theta,
\end{equation}
where we set $\Phi = \Phi(\varphi, \theta) = \sqrt{ (R+r_\epsilon(\theta)\cos \varphi)^2+(r_\epsilon^\prime(\theta))^2 }$.
%
%When we set $\varphi \in S^1$,  the domain is a perturbed torus with that congruent to $T^2_\epsilon=T^\epsilon\cup T^\epsilon$. The corresponding Riemannian metric, Riemannian gradient and Laplace-Beltrami on $T^2_\epsilon$ remain same as in \eqref{metric}, \eqref{gradient}, and \eqref{LB}, respectively.

\subsection{Existence of patterns}
The arguments in this section follow those used in \cite[Section 4.1]{kamal}. We only show the main points of arguments in each proof. For further detail see \cite[Section 4.1]{kamal}. 
\begin{theorem}\label{exist}
	Let $U$ be the pattern of \eqref{eq1} for  $M=T^2$ given by {\rm Theorem \ref{t2}}. There exists $\epsilon_0>0$ such that  for each $|\epsilon| \in (0,\epsilon_0)$, a pattern $U^\epsilon$ of  \eqref{eq1}  for  $M=T^2_\epsilon$ exists.
\end{theorem}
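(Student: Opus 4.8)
The plan is to establish Theorem \ref{exist} via the implicit function theorem, treating $\epsilon$ as a bifurcation parameter around the known solution $U$ for $T^2_0 = T^2$. The key structural observation is that the family of manifolds $T^2_\epsilon$ is parameterized smoothly in $\epsilon$ through the radius function $r_\epsilon(\theta) = r + \epsilon \sin(n\theta)$, so the Laplace--Beltrami operator $\Delta_{g^\epsilon}$ given in \eqref{LB} depends smoothly on $\epsilon$ and reduces to $\Delta_g$ of \eqref{Laplace operator for torus} when $\epsilon = 0$. Accordingly, I would define a map $F(u,\epsilon) = \Delta_{g^\epsilon} u + f(u)$ acting between appropriate function spaces on the fixed parameter domain $I = S^1 \times S^1$ (the parametrization is by $(\varphi,\theta)$ for every $\epsilon$, so the underlying coordinate torus does not move; only the metric coefficients change). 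The solution $U$ from Theorem \ref{t2} satisfies $F(U,0) = 0$.

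First I would set up the functional-analytic framework: take $F \colon C^{2,\alpha}(I) \times \mathbb{R} \to C^{0,\alpha}(I)$ (or the analogous Sobolev pair $H^2 \times \mathbb{R} \to L^2$), verify that $F$ is $C^1$ jointly in $(u,\epsilon)$ using the explicit smooth dependence of the coefficients $1/r_\epsilon^2$, $1/\Phi^2$, $\Phi_\varphi/(r_\epsilon^2 \Phi)$, and $(r_\epsilon' \Phi - r_\epsilon \Phi_\theta)/(r_\epsilon \Phi^3)$ on $\epsilon$, all of which are smooth and nonsingular on the closure of the admissible range since $R > r + |\epsilon|$ keeps $\Phi$ bounded away from zero. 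The crucial hypothesis to check is that the Fréchet derivative $D_u F(U,0) = \Delta_g + f'(U)$ is an isomorphism onto $C^{0,\alpha}(I)$ (equivalently, invertible as a map $H^2 \to L^2$). This linearized operator is exactly the operator appearing in the eigenvalue problem \eqref{egvals}, and by Lemma \ref{stab} its principal eigenvalue $\lambda_1$ is strictly positive, so $0$ is not in its spectrum; hence $\Delta_g + f'(U)$ is invertible. This is the step that makes the whole argument work, and it is where the stability established in Section 2 is used in an essential way.

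Given the invertibility of $D_u F(U,0)$, the implicit function theorem yields $\epsilon_0 > 0$ and a $C^1$ family $\epsilon \mapsto U^\epsilon \in C^{2,\alpha}(I)$ with $U^0 = U$ and $F(U^\epsilon, \epsilon) = 0$, so each $U^\epsilon$ is a stationary solution of \eqref{eq1} on $T^2_\epsilon$. It remains to confirm that $U^\epsilon$ is genuinely a \emph{pattern}, i.e., stable and nonconstant. Stability follows by a perturbation argument on the principal eigenvalue: since the coefficients of the linearized operator and the area element $d\sigma^\epsilon$ converge in the relevant norms to those at $\epsilon = 0$ as $\epsilon \to 0$, and since principal eigenvalues depend continuously on such perturbations (via the Rayleigh quotient \eqref{ev} and standard continuity of the bottom of the spectrum), the principal eigenvalue $\lambda_1^\epsilon$ of the problem linearized at $U^\epsilon$ satisfies $\lambda_1^\epsilon \to \lambda_1 > 0$; shrinking $\epsilon_0$ if necessary guarantees $\lambda_1^\epsilon > 0$, hence stability. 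Nonconstancy is preserved for small $\epsilon$ because $U^\epsilon \to U$ in $C^0$ and $U$ is nonconstant.

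The main obstacle I anticipate is not the implicit function theorem itself but the careful verification that $F$ is continuously differentiable jointly in $(u,\epsilon)$ and that the linearized operator maps between the chosen spaces with the expected mapping properties, given the somewhat involved first-order coefficients in \eqref{LB}. One must check that $\Phi$, $\Phi_\varphi$, and $\Phi_\theta$ are smooth and that the denominators stay uniformly bounded below on $I$ for all small $\epsilon$, which the constraint $R > r + |\epsilon|$ ensures; this is a routine but slightly delicate computation. A secondary technical point is the transfer of spectral continuity across the $\epsilon$-family: because each $T^2_\epsilon$ carries a different metric, the eigenvalue problems live on different inner-product spaces, so one should phrase the continuity of $\lambda_1^\epsilon$ in terms of the Rayleigh quotient with the $\epsilon$-dependent weight $d\sigma^\epsilon$ and invoke the uniform equivalence of these weights to the flat one for small $\epsilon$. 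Since the excerpt notes that these arguments parallel \cite[Section 4.1]{kamal}, I would import those estimates and focus the exposition on the points specific to the explicit perturbation $r_\epsilon(\theta) = r + \epsilon\sin(n\theta)$.
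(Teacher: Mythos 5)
Your proposal is correct and follows essentially the same route as the paper: the paper's Lemma \ref{apl} applies the implicit function theorem to $F(\epsilon,v)=\Delta_{g^\epsilon}(U+v)+f(U+v)$ on the fixed coordinate torus $I$, with invertibility of $\Delta_g+f'(U)$ coming from $\lambda_1>0$, and its Lemmas \ref{int}--\ref{uni} establish $\lambda_1^\epsilon\to\lambda_1$ exactly through the Rayleigh-quotient comparison with the $\epsilon$-dependent weights, as you outline. The only differences are notational (the paper solves for the correction $v=U^\epsilon-U$ rather than for $u$ itself) and that the paper leaves nonconstancy implicit, so no substantive divergence remains.
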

%The inferential arguments to prove Theorem \ref{exist}  is already established in \cite{kamal} for similar problem  on topological tori. For further detail please see \cite[Section 4.1]{kamal}.
\begin{lemma}\label{apl}
	%There exists $\epsilon_2 \in (0,\epsilon_0)$ such that 
Let $0<\alpha<1$. There exists $\epsilon_1>0$ such that for each $|\epsilon| \in (0,\epsilon_1)$, a stationary solution $U^\epsilon$ of \eqref{eq1}  for $M=T^2_\epsilon$ and $\delta_1(\epsilon)> 0$ with $\lim\limits_{\epsilon\rightarrow 0} \delta_1(\epsilon)=0$  exist and satisfy
		\begin{equation*}
	\Vert U^\epsilon- U\Vert_{\mathnormal{C}^{2,\alpha}(I)} < \delta_1(\epsilon),\text{ if } |\epsilon| \in (0, \epsilon_1),
	\end{equation*}
	where $I=S^1\times S^1$ is given in \eqref{parametrization of perturbed tori}.
\end{lemma}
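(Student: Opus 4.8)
The plan is to apply the implicit function theorem in an appropriate Banach space setting, treating $\epsilon$ as the perturbation parameter and $U$ (the pattern on $T^2$ from Theorem~\ref{t2}) as the known solution at $\epsilon=0$. First I would reformulate the stationary problem $\Delta_{g^\epsilon} v + f(v) = 0$ on $T^2_\epsilon$ as the zero set of a nonlinear map $F(\epsilon, v) = \Delta_{g^\epsilon} v + f(v)$, where I regard the unknown $v$ as a function on the fixed parameter domain $I = S^1\times S^1$ (the diffeomorphism type of $T^2_\epsilon$ does not change with $\epsilon$, so all the $\epsilon$-dependence is absorbed into the coefficients of $\Delta_{g^\epsilon}$ as given explicitly in \eqref{LB}). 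I would take $F\colon (-\epsilon_0,\epsilon_0)\times C^{2,\alpha}(I)\to C^{0,\alpha}(I)$, noting that $F(0,U)=0$ since $U$ solves \eqref{eq1} on $T^2$. The choice of Hölder spaces is what makes the elliptic operator behave well and guarantees the required $C^{2,\alpha}$ closeness.

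The crucial ingredient is the invertibility of the linearized operator $D_v F(0,U) = \Delta_g \cdot + f'(U)\cdot$ acting from $C^{2,\alpha}(I)$ to $C^{0,\alpha}(I)$. Here I would invoke Lemma~\ref{stab}: the principal eigenvalue $\lambda_1$ of the operator $\Delta_g + f'(U)$ (with periodic conditions, i.e.\ on the closed surface $T^2$) is strictly positive. Positivity of $\lambda_1$ means $0$ is not an eigenvalue of the linearization, so $\Delta_g + f'(U)$ has trivial kernel and, being a self-adjoint elliptic operator of index zero, is an isomorphism between the stated Hölder spaces by standard Schauder theory. I would verify that $F$ is $C^1$ jointly in $(\epsilon, v)$: smoothness in $v$ follows from $f\in C^1$, while smoothness in $\epsilon$ follows from the explicit, smooth dependence of the coefficients in \eqref{LB} on $\epsilon$ through $r_\epsilon(\theta)=r+\epsilon\sin(n\theta)$ and $\Phi$ (one checks the metric coefficients and their first two $\varphi,\theta$-derivatives depend smoothly on $\epsilon$, uniformly on $I$, using $R>r+|\epsilon|$ to keep $\Phi$ bounded away from zero).

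The implicit function theorem then yields $\epsilon_1>0$ and a $C^1$ map $\epsilon\mapsto U^\epsilon\in C^{2,\alpha}(I)$ for $|\epsilon|<\epsilon_1$ with $F(\epsilon, U^\epsilon)=0$ and $U^{0}=U$. The continuity of this branch at $\epsilon=0$ gives the quantitative estimate: setting $\delta_1(\epsilon):=\|U^\epsilon-U\|_{C^{2,\alpha}(I)}$, the $C^1$ dependence forces $\delta_1(\epsilon)\to 0$ as $\epsilon\to 0$, which is exactly the conclusion. I would remark that $U^\epsilon$ is automatically a genuine stationary solution of \eqref{eq1} on $T^2_\epsilon$ once expressed back in the geometry of $T^2_\epsilon$; the claim that it is in fact a \emph{pattern} (i.e.\ stable and nonconstant) is deferred, since stability will follow from the persistence of $\lambda_1>0$ under small perturbation and nonconstancy from the $C^{2,\alpha}$ closeness to the nonconstant $U$.

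The main obstacle I anticipate is the verification of joint $C^1$ regularity of $F$ and, relatedly, the surjectivity half of the isomorphism claim for $D_vF(0,U)$. The smooth $\epsilon$-dependence is in principle routine but genuinely requires controlling the coefficients of \eqref{LB} --- in particular the lower-order drift terms involving $\Phi_\varphi, \Phi_\theta$ --- and showing their $C^{0,\alpha}(I)$ norms vary $C^1$ in $\epsilon$; the uniform lower bound on $\Phi$ coming from $R>r+|\epsilon|$ is what keeps these terms and their $\epsilon$-derivatives bounded. For the linearized operator, translating the variational positivity of $\lambda_1$ from Lemma~\ref{stab} into Fredholm invertibility on Hölder spaces is the conceptual crux, though it is standard once self-adjointness and the spectral gap at $0$ are in hand. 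Since the paper states these arguments follow \cite[Section 4.1]{kamal} and only the main points are shown, I would keep the exposition at the level of identifying $F$, citing Lemma~\ref{stab} for invertibility, and invoking the implicit function theorem.
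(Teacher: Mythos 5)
Your proposal is correct and follows essentially the same route as the paper: both set up $F(\epsilon,v)=\Delta_{g^\epsilon}(\cdot)+f(\cdot)$ on $X\times C^{2,\alpha}(I)\to C^{\alpha}(I)$, invert the linearization $\Delta_g+f'(U)$ using $\lambda_1>0$ from Lemma \ref{stab} together with Schauder theory, and apply the implicit function theorem to obtain a $C^1$ branch $\epsilon\mapsto U^\epsilon$ with $\|U^\epsilon-U\|_{C^{2,\alpha}(I)}\to 0$. The only difference is cosmetic --- you center the map at $(0,U)$ while the paper writes $U^\epsilon=U+v(\epsilon)$ and centers at $(0,0)$ --- and your added remarks on the joint $C^1$ regularity via the uniform lower bound on $\Phi$ merely flesh out a step the paper asserts without detail.
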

\begin{proof} Set
\begin{equation*}
X=\left(-\frac 12(R-r),\frac 12(R-r)\right)\subset \mathbb R\  \mbox{ and }\  Y=C^{2,\alpha}(I).
\end{equation*}
%and  $X:=(-\epsilon_0,\epsilon_0)$.
Let $F$ be a mapping  from $X \times Y $ to  $\mathnormal{C}^\alpha (I)$ defined by
\begin{equation*}
F(\epsilon,v)= \Delta_{g^\epsilon}(U+v)+f(U+ v)\ \mbox{ for }\ (\epsilon, v)\in X \times Y.
\end{equation*}
With the aid of the implicit function theorem, we solve $F=0$ near the point $(0,0)$. 
We notice that $F(0,0)=0$ and $F$ is of class $C^1$.
The partial Fr\'echet derivative of the mapping $F(\epsilon,v)$ with respect to $v$ at $(0,0)$ is expressed as
\begin{equation*}
\dfrac{\partial F}{\partial v}(0, 0)q=\Delta_g q+f^\prime(U)q \quad \mbox{ for } q \in Y.
\end{equation*} 
Let us show that $\dfrac{\partial F}{\partial v}(0, 0)$ is invertible. For each  $h \in C^\alpha(I)$, we consider the following problem for $q$:
\begin{equation}
\label{eqh}
\Delta_{g} q+f^\prime(U) q=h  \mbox{ in }T^2.
\end{equation}
Since $\lambda_1>0$, the standard theory of elliptic partial differential equations of second order provides us a unique solution $q\in Y$ of \eqref{eqh} and a constant $C>0$ independent of $q$ and $h$ satisfying 
 \begin{equation*}
 \left \Vert\left[\dfrac{\partial F}{\partial v}(0, 0)\right]^{-1}h\right\Vert_{C^{2,\alpha}(I)} =\Vert q\Vert_{C^{2,\alpha}(I)} \le C \Vert h\Vert_{C^{\alpha}(I)}
 \end{equation*}
 for each $h \in C^\alpha(I)$. Thus,  by the implicit function theorem (see \cite[Theorem 15.1, p. 148]{deimling} or \cite[Theorem 2.7.2, p.34]{nirenberg}), there exist $\epsilon_1\in \left(0, \frac 12(R-r)\right)$ with $\mathcal N=(-\epsilon_1,\epsilon_1)$ and a unique $C^1$ mapping $v:\mathcal N \to Y$ such that $v(0)=0$ and for every $\epsilon\in\mathcal N$
   \begin{equation}\label{ift}
  F(\epsilon,v(\epsilon))=\Delta_{g^\epsilon} (U^\epsilon )+f(U^\epsilon)=0,
  \end{equation}  
 where we set $U^\epsilon=U+v(\epsilon)$. This yields the conclusion.
 \end{proof}
 % Now, it is safe to say that there exist a proper subset  $X^*=(-\epsilon_1, \epsilon_1)$ of   $X$ and a unique continuous map $\rho: X^*\rightarrow Y $ such that $\rho(0)=0$ and for every $\epsilon \in X^*$% \begin{equation*}
% F(\epsilon, \rho(\epsilon))=\Delta_{g^\epsilon} U+v+f(U+v)=0,\\
% \end{equation*}
   % Now, it is safe to say that there is a proper subset  $X^*=(-\epsilon_1, \epsilon_1)$ of   $X$ such that there exists  a unique function  $v=\rho(\epsilon)$  defined for $\epsilon \in X^*$ and $v\in Y$ that satisfy $F(\epsilon,v)=0$. Moreover, for every $\epsilon \in X^*$	
  %\begin{equation}\label{ift}
 % F(\epsilon,\rho(\epsilon))=\Delta_{g^\epsilon} (U^\epsilon )+f(U^\epsilon)=0,\\
  %\end{equation}  
 %where we set $U^\epsilon=U+\rho(\epsilon)$. Thus, the conclusion follows.
% \end{proof}
 
Now, we are in position to prove the stability of the stationary solution $U^\epsilon$ of \eqref{eq1} for $M=T^2_\epsilon$.
Let $\lambda_1^\epsilon$ be the principal eigenvalue with the normalized eigenfunction $\phi_1^\epsilon \in H^1(I)$ of \eqref{egvals} linearized at $U^\epsilon$ for  $M=T^2_\epsilon$. Then we have
\begin{align}\label{pe}
\begin{cases}
\ \ \Delta_{g^\epsilon} \phi_1^\epsilon+f^\prime(U^\epsilon) \phi_1^\epsilon=-\lambda_1^\epsilon\phi_1^\epsilon\ \mbox{ in }\ T^2_\epsilon,\\
\ \ \|\phi_1^\epsilon\|_{L^2(T^2_\epsilon)}=1, \quad \phi_1^\epsilon>0\ \mbox{ in }\ T^2_\epsilon, %\dfrac {\partial \phi^\epsilon}{\partial\nu}=0 &\mbox{ on }\  \partial T^\epsilon
\end{cases}
\end{align}
where
\begin{equation}\label{pe-ev} 
\lambda_1^\epsilon=  \inf_{\substack{\phi \neq 0\\\phi \in H^1(T^2_\epsilon)}}  \dfrac{ \int\limits_{T^2_\epsilon} \left( |\nabla_{g^\epsilon} \phi|^2 - f^\prime (U^\epsilon)\phi^2 \right) d\sigma^\epsilon}{\int\limits _{T^2_\epsilon} \phi^2 d\sigma^\epsilon} \left(=  \int\limits_{T^2_\epsilon} \left[ |\nabla_{g^\epsilon} \phi_1^\epsilon|^2 - f^\prime (U^\epsilon)(\phi_1^\epsilon)^2 \right] d\sigma^\epsilon\right).
\end{equation}

\begin{lemma}\label{int}
	For every $| \epsilon|  \in (0,\epsilon_1)$, there exists $\delta_2(\epsilon)>0$ with $\lim\limits_{\epsilon\rightarrow 0}\delta_2(\epsilon)=0$ such that
	\begin{enumerate}
		\item[{\rm (i)}]
		$\| f^\prime (U)- f^\prime (U^\epsilon)\|_{\infty} \le \delta_2 (\epsilon)$,
		\item[{\rm (ii)}] $(1 -\delta_2(\epsilon))d\sigma  \le  d\sigma^\epsilon\le  (1 +\delta_2(\epsilon))d\sigma$,
		\item[{\rm (iii)}] $(1- \delta_2(\epsilon) ) |\nabla_{g}\phi_1^\epsilon|^2 \le |\nabla_{g^\epsilon}\phi_1^\epsilon|^2 \le (1+ \delta_2(\epsilon) ) |\nabla_{g}\phi_1^\epsilon|^2 $.
	\end{enumerate}
\end{lemma}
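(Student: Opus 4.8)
The plan is to establish the three estimates by exploiting the fact that, as $\epsilon \to 0$, the perturbed torus $T^2_\epsilon$ converges to the standard torus $T^2$ in a strong sense, together with the $C^{2,\alpha}$-convergence $U^\epsilon \to U$ from Lemma~\ref{apl}. The key observation is that the radius function $r_\epsilon(\theta) = r + \epsilon \sin(n\theta)$ satisfies $\|r_\epsilon - r\|_\infty \le |\epsilon|$ and $\|r_\epsilon'\|_\infty = n|\epsilon|$, so all geometric quantities that distinguish $g^\epsilon$ from $g$ are controlled by $|\epsilon|$ (with $n$ fixed). I would define $\delta_2(\epsilon)$ at the end as the maximum of the three individual error bounds produced below, which automatically gives a single modulus of continuity tending to $0$.

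For part~(i), I would combine two facts: first, $f' \in C^0$ (since $f \in C^1$) is uniformly continuous on the compact range of values taken by $U$ and $U^\epsilon$ (uniformly bounded by Lemma~\ref{apl}); second, $\|U^\epsilon - U\|_\infty \le \|U^\epsilon - U\|_{C^{2,\alpha}(I)} < \delta_1(\epsilon) \to 0$. Hence $\|f'(U) - f'(U^\epsilon)\|_\infty$ is bounded by the modulus of continuity of $f'$ evaluated at $\delta_1(\epsilon)$, which tends to $0$. For part~(ii), I would compare the two area elements directly: from the explicit formulas,
\begin{equation*}
\frac{d\sigma^\epsilon}{d\sigma} = \frac{r_\epsilon(\theta)\sqrt{(R+r_\epsilon(\theta)\cos\varphi)^2 + (r_\epsilon'(\theta))^2}}{r(R+r\cos\varphi)}.
\end{equation*}
Since $R > r+|\epsilon| > 0$ keeps the denominator bounded away from $0$ uniformly, and since $r_\epsilon \to r$, $r_\epsilon' \to 0$ uniformly in $\theta$ as $\epsilon \to 0$, this ratio converges to $1$ uniformly on $I$; extracting the error bound gives the two-sided inequality with an $O(|\epsilon|)$ error.

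For part~(iii), the comparison is between $|\nabla_{g^\epsilon}\phi_1^\epsilon|^2$ and $|\nabla_{g}\phi_1^\epsilon|^2$ for the \emph{same} function $\phi_1^\epsilon$, so this is purely a statement about the two metrics and not about the eigenfunction itself. Writing out both squared gradients from the gradient formulas, the $\varphi$-component involves the factor $r_\epsilon^{-2}(\theta)$ versus $r^{-2}$, and the $\theta$-component involves $\bigl[(R+r_\epsilon\cos\varphi)^2 + (r_\epsilon')^2\bigr]^{-1}$ versus $(R+r\cos\varphi)^{-2}$; each pair of coefficients has a ratio tending uniformly to $1$ as $\epsilon \to 0$. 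The main technical point, and the place requiring the most care, is that $\nabla_g \phi_1^\epsilon$ and its pointwise norm depend on $\epsilon$ through $\phi_1^\epsilon$, so I must be sure the comparison of coefficients is uniform in $(\varphi,\theta) \in I$ and independent of which $\phi_1^\epsilon$ appears; this is guaranteed because the coefficient ratios depend only on the metrics and are bounded uniformly away from degeneracy by the constraint $R > r + |\epsilon|$. I expect this uniform-in-$\epsilon$ control of the metric coefficients — rather than any eigenfunction estimate — to be the only real obstacle, and it is resolved by the explicitness of the perturbation $r_\epsilon(\theta)$. Taking $\delta_2(\epsilon)$ to be the largest of the three error terms completes the argument.
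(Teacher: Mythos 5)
Your proof is correct and follows essentially the same route as the paper: part~(i) from Lemma~\ref{apl} plus the (uniform) continuity of $f^\prime$, and parts~(ii) and~(iii) from the explicit formulas for $d\sigma^\epsilon$ and $\nabla_{g^\epsilon}$, whose coefficients deviate from those at $\epsilon=0$ by $O(|\epsilon|)$ uniformly on $I$ (the paper phrases this as a Taylor expansion in $\epsilon$, but it is the same computation). Your observations that the comparison in~(iii) is purely a metric-coefficient estimate independent of the particular function $\phi_1^\epsilon$, and that $\|r_\epsilon^\prime\|_\infty = n|\epsilon| \to 0$ only because $n$ is held fixed, are exactly the points that make the paper's sketch rigorous.
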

\begin{proof}
	Assertion (i) comes from Lemma \ref{apl} and the continuity of $f^\prime$ for some $\delta_2(\epsilon)>0$ with $\lim\limits_{\epsilon\rightarrow 0}\delta_2(\epsilon)=0$.
Next, represent $\sqrt{|g^\epsilon|}$ in assertion (ii) and $|\nabla_{g^\epsilon}\phi_1^\epsilon|^2$ in assertion (iii) by using Taylor expansion with respect to $\epsilon$ at $\epsilon=0$. By the continuity of $\sqrt{|g^\epsilon|}$ and $|\nabla_{g^\epsilon}\phi_1^\epsilon|^2$, we can generate  a chain of inequalities on each expansion and choose $\delta_2(\epsilon)>0$ smaller to obtain both assertion (ii) and assertion (iii)
\end{proof}
\begin{lemma}\label{bounded}
	There exists a constant $C^* >0$ such that  if $ |\epsilon| <  \epsilon_1$ then
	$$
	\int\limits_{T^2_\epsilon}|\nabla_{g^\epsilon} \phi_1^\epsilon|^2 d\sigma^\epsilon \le C^*.
	$$
\end{lemma}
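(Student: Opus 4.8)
The plan is to start from the variational identity for the principal eigenvalue already recorded in the parentheses of \eqref{pe-ev}, namely
\[
\int_{T^2_\epsilon}|\nabla_{g^\epsilon}\phi_1^\epsilon|^2\,d\sigma^\epsilon
=\lambda_1^\epsilon+\int_{T^2_\epsilon}f^\prime(U^\epsilon)(\phi_1^\epsilon)^2\,d\sigma^\epsilon,
\]
and to bound each of the two terms on the right uniformly in $\epsilon$. The normalization $\|\phi_1^\epsilon\|_{L^2(T^2_\epsilon)}=1$ from \eqref{pe} will do most of the work.

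First I would control the zeroth-order term. Using only the normalization,
\[
\left|\int_{T^2_\epsilon}f^\prime(U^\epsilon)(\phi_1^\epsilon)^2\,d\sigma^\epsilon\right|
\le \|f^\prime(U^\epsilon)\|_\infty\int_{T^2_\epsilon}(\phi_1^\epsilon)^2\,d\sigma^\epsilon
=\|f^\prime(U^\epsilon)\|_\infty,
\]
and by Lemma \ref{int}(i) together with the boundedness of $f^\prime(U)$ on the compact torus we get $\|f^\prime(U^\epsilon)\|_\infty\le\|f^\prime(U)\|_\infty+\delta_2(\epsilon)\le\|f^\prime(U)\|_\infty+1$ once $|\epsilon|$ is small. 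This bound is independent of $\epsilon$.

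Next I would bound $\lambda_1^\epsilon$ from above by inserting the constant test function $\phi\equiv1$ into the Rayleigh quotient \eqref{pe-ev}. Since its gradient vanishes,
\[
\lambda_1^\epsilon\le\frac{-\int_{T^2_\epsilon}f^\prime(U^\epsilon)\,d\sigma^\epsilon}{\int_{T^2_\epsilon}d\sigma^\epsilon}
\le\|f^\prime(U^\epsilon)\|_\infty\le\|f^\prime(U)\|_\infty+1.
\]
Combining the two estimates yields the assertion with $C^*=2\bigl(\|f^\prime(U)\|_\infty+1\bigr)$, a constant independent of $\epsilon$.

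I do not expect a genuine obstacle here, since every step reduces to the normalization of $\phi_1^\epsilon$ and to the uniform $C^{2,\alpha}$-closeness of $U^\epsilon$ to $U$ established in Lemma \ref{apl} (used through Lemma \ref{int}(i)). The one point deserving care is the uniformity of the upper bound on $\lambda_1^\epsilon$: this is precisely why the constant test function is the right choice, as it eliminates the unknown eigenfunction from the estimate and leaves only $\|f^\prime(U^\epsilon)\|_\infty$, which Lemma \ref{int}(i) controls uniformly.
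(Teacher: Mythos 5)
Your proposal is correct and follows essentially the same route as the paper's proof: both start from the identity $\int_{T^2_\epsilon}|\nabla_{g^\epsilon}\phi_1^\epsilon|^2\,d\sigma^\epsilon=\lambda_1^\epsilon+\int_{T^2_\epsilon}f^\prime(U^\epsilon)(\phi_1^\epsilon)^2\,d\sigma^\epsilon$, bound $\lambda_1^\epsilon$ by inserting the constant test function $\phi\equiv 1$ into \eqref{pe-ev}, and control the zeroth-order term via the normalization $\|\phi_1^\epsilon\|_{L^2(T^2_\epsilon)}=1$ together with Lemma \ref{int}(i). The only difference is cosmetic: the paper writes the bounds as $\delta_2(\epsilon)+\max_{T^2}|f^\prime(U)|$ while you absorb $\delta_2(\epsilon)$ into the constant $1$, which is the same estimate.
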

\begin{proof}
	From \eqref{pe}, we have that
	\begin{align}\label{apa}
	\int\limits_{T^2_\epsilon}  |\nabla_{g^\epsilon} \phi_1^\epsilon|^2  d\sigma^\epsilon=\lambda_1^\epsilon + \int\limits_{T^2_\epsilon} f^\prime (U^\epsilon) (\phi_1^\epsilon)^2 d\sigma^\epsilon
	\end{align} 
First we need to prove that $\lambda_1^\epsilon$ is bounded from above. By \eqref{pe-ev}, every $\phi\in H^1(I)$ satisfies
	\begin{align*}
	\lambda_1^\epsilon\le \dfrac{\int\limits_{T^2_\epsilon} \left( |\nabla_{g^\epsilon} \phi|^2  - f^\prime (U^\epsilon) \phi^2\right)  d\sigma^\epsilon }{ \int\limits_{T^2_\epsilon} \phi^2 d\sigma^\epsilon}.
	\end{align*}
	Choose $\phi\equiv1$ and use assertion (i) to obtain
	\begin{align*}
\lambda_1^\epsilon \le \delta_2(\epsilon)+\max_{T^2} |f^\prime(U)|.
	\end{align*}
Since $\|\phi_1^\epsilon\|_{L^2(T^2_\epsilon)}=1$, assertion (i) gives	
	\begin{align*}
	\int\limits_{T^2_\epsilon} f^\prime (U^\epsilon) (\phi_1^\epsilon)^2 d\sigma^\epsilon &= \int\limits_{T^2_\epsilon} \left(f^\prime (U^\epsilon)-f^\prime(U)\right) (\phi_1^\epsilon)^2 d\sigma^\epsilon+  \int\limits_{T^2_\epsilon}f^\prime (U) (\phi_1^\epsilon)^2 d\sigma^\epsilon \\[0.25cm]
	&\le \delta_2(\epsilon) + \max_{T^2} |f^\prime (U)|.
	\end{align*}
	Then, \eqref{apa} yields the conclusion.
\end{proof}

\begin{lemma}\label{uni}
	$\lambda_1^\epsilon \rightarrow \lambda_1 \textrm{ as }  \epsilon \rightarrow 0$.
\end{lemma}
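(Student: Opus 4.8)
The plan is to establish the convergence by a two-sided comparison of Rayleigh quotients, proving separately that $\limsup_{\epsilon\to0}\lambda_1^\epsilon\le\lambda_1$ and $\liminf_{\epsilon\to0}\lambda_1^\epsilon\ge\lambda_1$.

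For the upper bound I would insert the unperturbed principal eigenfunction $\phi_1$ (normalized in $L^2(T^2)$, so that $\int_{T^2}\phi_1^2\,d\sigma=1$ and $\lambda_1=\int_{T^2}(|\nabla_g\phi_1|^2-f^\prime(U)\phi_1^2)\,d\sigma$) as a trial function in the variational characterization \eqref{pe-ev} of $\lambda_1^\epsilon$, obtaining
\[
\lambda_1^\epsilon\le\frac{\int_{T^2_\epsilon}\bigl(|\nabla_{g^\epsilon}\phi_1|^2-f^\prime(U^\epsilon)\phi_1^2\bigr)\,d\sigma^\epsilon}{\int_{T^2_\epsilon}\phi_1^2\,d\sigma^\epsilon}.
\]
Applying Lemma \ref{int} --- where I note that the pointwise gradient comparison in (iii) depends only on the metric coefficients $r_\epsilon$ and $\Phi$ and therefore holds verbatim with $\phi_1$ in place of $\phi_1^\epsilon$ --- the numerator converges to $\int_{T^2}(|\nabla_g\phi_1|^2-f^\prime(U)\phi_1^2)\,d\sigma=\lambda_1$ and the denominator to $\int_{T^2}\phi_1^2\,d\sigma=1$ as $\epsilon\to0$, whence $\limsup_{\epsilon\to0}\lambda_1^\epsilon\le\lambda_1$.

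For the lower bound I would reverse the roles and use the perturbed eigenfunction $\phi_1^\epsilon\in H^1(I)$ as a trial function in the Rayleigh quotient \eqref{ev} for $\lambda_1$ on $T^2$, giving $\lambda_1\le N_\epsilon/A_\epsilon$ with $N_\epsilon=\int_{T^2}(|\nabla_g\phi_1^\epsilon|^2-f^\prime(U)(\phi_1^\epsilon)^2)\,d\sigma$ and $A_\epsilon=\int_{T^2}(\phi_1^\epsilon)^2\,d\sigma$. Recalling from \eqref{pe-ev} that $\lambda_1^\epsilon=\int_{T^2_\epsilon}(|\nabla_{g^\epsilon}\phi_1^\epsilon|^2-f^\prime(U^\epsilon)(\phi_1^\epsilon)^2)\,d\sigma^\epsilon$ and from \eqref{pe} that $\int_{T^2_\epsilon}(\phi_1^\epsilon)^2\,d\sigma^\epsilon=1$, I would show $A_\epsilon\to1$ and $N_\epsilon-\lambda_1^\epsilon\to0$ term by term, using Lemma \ref{int}(i)--(iii) to pass between the $\epsilon$- and $0$-integrals. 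The errors are all bounded by $O(\delta_2(\epsilon))$ times quantities that stay bounded, the decisive one being $\int_{T^2}|\nabla_g\phi_1^\epsilon|^2\,d\sigma$; combining Lemma \ref{int}(ii),(iii) with Lemma \ref{bounded} yields $\int_{T^2}|\nabla_g\phi_1^\epsilon|^2\,d\sigma\le C^*/(1-\delta_2(\epsilon))^2$, so this Dirichlet energy is uniformly controlled. Passing to the liminf in $\lambda_1 A_\epsilon\le N_\epsilon=\lambda_1^\epsilon+o(1)$ then gives $\lambda_1\le\liminf_{\epsilon\to0}\lambda_1^\epsilon$, and the two bounds together establish the claim.

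The main obstacle is the lower bound. In the upper bound I am free to use the fixed, well-behaved trial function $\phi_1$, whereas here I must control integrals of the unknown family $\phi_1^\epsilon$, whose regularity is not directly at my disposal. The decisive ingredient is therefore the uniform energy bound of Lemma \ref{bounded}, which prevents the $O(\delta_2(\epsilon))$ relative errors of the metric and area-element comparisons from being amplified by an unbounded Dirichlet energy; a secondary subtlety is that the exact normalization $\int_{T^2_\epsilon}(\phi_1^\epsilon)^2\,d\sigma^\epsilon=1$ must first be converted to the near-normalization $A_\epsilon\to1$ before one is allowed to divide.
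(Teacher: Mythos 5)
Your proposal is correct and follows essentially the same route as the paper: a two-sided comparison of Rayleigh quotients, inserting $\phi_1$ into \eqref{pe-ev} for the upper bound and $\phi_1^\epsilon$ into \eqref{ev} for the lower bound, with Lemma \ref{int} controlling the metric/area/nonlinearity errors and Lemma \ref{bounded} supplying the uniform Dirichlet-energy bound (this is exactly the argument the paper imports from \cite[Lemma 4.5]{kamal}). Your added remarks --- that Lemma \ref{int}(iii) is a pointwise metric comparison valid for any function, and that the normalization must first be transferred to $A_\epsilon\to1$ --- are correct fillings-in of details the paper leaves implicit.
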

\begin{proof} This follows directly from the same argument as in the proof of \cite[Lemma 4.5, pp. 11--12 ]{kamal}.
From \eqref{ev}, we have
\begin{align*}
\lambda_1\le \dfrac{\int\limits_{T^2} \left( |\nabla_{g} \phi_1^\epsilon|^2  - f^\prime (U) (\phi_1^\epsilon)^2\right)  d\sigma}{ \int\limits\limits_{T^2} (\phi_1^\epsilon)^2 d\sigma}.
\end{align*}
With the aid of Lemmas \ref{int} and \ref{bounded}, we infer that there exists $\delta_3(\epsilon)>0$ with $\lim\limits_{\epsilon\rightarrow 0}\delta_3(\epsilon)=0$ satisfying
\begin{equation}\label{1st estimate}
\lambda_1 \le \lambda_1^\epsilon +\delta_3(\epsilon).
\end{equation}
By proceeding similarily, from \eqref{pe-ev}, we have
\begin{align*}
\lambda^\epsilon_1\le \dfrac{\int\limits_{T_\epsilon^2} \left( |\nabla_{g^\epsilon} \phi_1|^2  - f^\prime (U^\epsilon) (\phi_1)^2\right)  d\sigma^\epsilon}{ \int\limits\limits_{T_\epsilon^2} (\phi_1)^2 d\sigma^\epsilon},
\end{align*}
and hence we infer that there exists $\delta_4(\epsilon)>0$ with $\lim\limits_{\epsilon\rightarrow 0}\delta_4(\epsilon)=0$ satisfying
\begin{equation}\label{2nd estimate}
\lambda_1^\epsilon \le \lambda_1 +\delta_4(\epsilon).
\end{equation}
Combining \eqref{1st estimate} and \eqref{2nd estimate} yields the conclusion.
\end{proof}

\begin{proof}[Proof of  Theorem \ref{exist}]
From Lemma \ref{apl}, we conclude that the stationary solution $U^\epsilon$ exists in the neighborhood of $\epsilon=0$.  Then, by Lemma \ref{uni}, the principal eigenvalue $\lambda_1^\epsilon $ is positive  for sufficiently small  $|\epsilon|$. This completes the proof.
\end{proof}

%\subsection{Existence of critical points}
\subsection{Proof of Theorem \ref{mt}}
First of all, we mention the symmetry of $U^\epsilon$ coming from the fact that $U^\epsilon$ is uniquely determined in a neighborhood of $U$ by the implicit function theorem.
To be precise, the implicit function theorem together with the symmetry of both $U$ and $T_\epsilon^2$ gives us the symmetry of $U^\epsilon$ with respect to $T_\epsilon^2\cap H$ for the following $n+1$ planes $H$:
\begin{equation}
\label{hyperplanes for symmetry}
H=\{ x_3=0\}, \{-x_1\sin\theta_k+x_2\cos\theta_k=0\}\ \mbox{ with }  k=0,1, \dots, n-1,
\end{equation}
where  $\theta_k=\frac {2k+1}{2n}\pi$.
 Thus we have in particular that if $ |\epsilon| < \epsilon_0$ then
 %It follows immediately from implicit function theorem that $U^\epsilon$ is unique on $T^2_\epsilon$.  The uniqueness of  $U^\epsilon$  signifies that $U^\epsilon$ is symmetric in $\varphi$. Thus,
\begin{align}
&\dfrac{\partial U^\epsilon}{\partial \varphi}=0\ \mbox{ for every }\  (\varphi,\theta)\in \{0,\pi\} \times S^1,\label{partial derivative in varphi}\\
&\dfrac{\partial U^\epsilon}{\partial \theta}=0\ \mbox{ for every }\  (\varphi,\theta)\in S^1\times \{\theta_k\, |\,  k=0,1, \dots, 2n-1\}.\label{partial derivative in theta}
\end{align}
These imply that if $ |\epsilon| < \epsilon_0$, then $U^\epsilon$ has at least $4n$ critical points in $T^2_\epsilon$ corresponding to the points in the following finite set $\mathcal C$
\begin{equation}
\label{4n critical points}
\mathcal C=\{0,\pi\}\times\{\theta_k\, |\,  k=0,1, \dots, 2n-1\}. 
\end{equation}	
In view of \eqref{Laplace operator for torus},  we recall that $U^0=U=U(\varphi)$ satisfies 
 \begin{equation}\label{ode for U}
\frac{1}{r^2} U_{\varphi\varphi} - \frac{\sin \varphi}{r(R+r \cos \varphi)} U_\varphi+f(U)=0\ \mbox{ in } T^2.
\end{equation}
Hence we have from \eqref{partial derivative in varphi}
 $$
\frac{1}{r^2} U_{\varphi\varphi} +f(U)=0\ \mbox{ at }  \varphi=0, \pi.
$$
Therefore, since $U$ is nonconstant and achieves its maximum at  $\varphi=\pi$ and its minimum at $\varphi=0$,  from the unique solvability of the Cauchy problem for the ordinary differential equation \eqref{ode for U} we must have that 
\begin{equation}
\label{2nd derivatives}
f(U(0)) < 0 < f(U(\pi)) \ \mbox{ and hence }\ U_{\varphi\varphi}(0)>0>U_{\varphi\varphi}(\pi).
\end{equation}
Since the set of critical points of $U$ equals $T^2\cap\{x_3=0\}$, it follows from Lemma \ref{apl}, \eqref{partial derivative in varphi} and \eqref{2nd derivatives} that there exists $\tau_1\in (0,\epsilon_0)$ satisfying
that if $|\epsilon| < \tau_1$, the set of critical points of $U^\epsilon$ is contained in $T_\epsilon^2\cap \{x_3=0\}$.
Thus, in order to determine all the critical points of $U^\epsilon$, we need to examine whether the derivative of $U^\epsilon$ with respect to $\theta$ vanishes or not for $\varphi=0,\pi$. 

Observe that as  $\epsilon \to 0$,
\begin{equation}\label{V1}
U^\epsilon=U+v(\epsilon)=U + \epsilon \dfrac{\partial U^\epsilon}{\partial \epsilon}\bigg|_{\epsilon=0}+ o(\epsilon).
\end{equation} 
Set $V=\dfrac{\partial U^\epsilon}{\partial \epsilon}\bigg|_{\epsilon=0}$.  By differentiating \eqref{V1} with respect to $\theta$ twice, we see that as  $\epsilon \rightarrow 0$
\begin{equation}\label{du}
\dfrac{\partial U^\epsilon}{\partial \theta}= \epsilon V_\theta+ o(\epsilon)\  \mbox{ and }\  \dfrac{\partial^2 U^\epsilon}{\partial \theta^2}= \epsilon V_{\theta\theta}+ o(\epsilon).
\end{equation}
We will examine $V$ and evaluate $ V_\theta, V_{\theta\theta}$ for $\varphi=0,\pi$.
 Recall that 
\begin{equation}\label{ff}
F(\epsilon, v(\epsilon) )= \Delta_{g^\epsilon} U^\epsilon+ f(U^\epsilon)=0 \ \mbox{ for  every } (\varphi,\theta)\in I=S^1\times S^1.
\end{equation}
Differentiating \eqref{ff} with respect to $\epsilon$ yields that
\begin{align*}
0=\frac {\partial}{\partial\epsilon}\left(F(\epsilon, v(\epsilon) )\right)=& \frac 1{r_\epsilon^2}U^\epsilon_{\varphi\varphi\epsilon} +\frac {\partial}{\partial\epsilon}\left(\frac 1{r_\epsilon^2}\right) U^\epsilon_{\varphi\varphi} +\frac 1{\Phi^2}U^\epsilon_{\theta\theta\epsilon}+\frac {\partial}{\partial\epsilon}\left( \frac 1{\Phi^2}\right) U^\epsilon_{\theta\theta} \\
&+\frac{\Phi_\varphi}{r_\epsilon^2 \Phi}U^\epsilon_{\varphi\epsilon}+\frac {\partial}{\partial\epsilon}\left(\frac{\Phi_\varphi}{r_\epsilon^2 \Phi}\right) U^\epsilon_\varphi\\
&+\frac{r_\epsilon^\prime\Phi-r_\epsilon\Phi_\theta}{r_\epsilon\Phi^3}U^\epsilon_{\theta\epsilon}+\frac {\partial}{\partial\epsilon}\left(\frac{r_\epsilon^\prime\Phi-r_\epsilon\Phi_\theta}{r_\epsilon\Phi^3}\right) U^\epsilon_\theta+f^\prime(U^\epsilon)U^\epsilon_\epsilon.
\end{align*}
Since $U^0$ depends only on $\varphi$, by setting $\epsilon=0$, we have
\begin{equation}\label{vv}
%\left(F(\epsilon, \rho (\epsilon) )\right)_\epsilon\bigg|_{\epsilon=0} =
 \Delta_{g} V + f^\prime(U)V=\frac 2r \sin (n\theta) \left[-f(U)+ \dfrac{R\sin \varphi}{2r(R+r\cos \varphi)^2} U_{\varphi} \right].
\end{equation}
Observe that the right-hand side of \eqref{vv} is infinitely differentiable  in $\theta$ and all the coefficients of the left-hand side of \eqref{vv} are independent of $\theta$.  Then, by the standard regularity theory for elliptic partial differential equations (see \cite{Gilbarg-Trudinger}), we may
differentiate \eqref{vv} with respect to $\theta$ twice to obtain
\begin{equation}\label{egvalv}
 \Delta_{g} \left(\dfrac{V_{\theta\theta}}{n^2}+V \right)+ f^\prime(U)\left(\dfrac{V_{\theta\theta}}{n^2}+V \right)=0.
\end{equation}
Then the function $\dfrac{V_{\theta\theta}}{n^2}+V$  might be an eigenfunction of problem \eqref{egvals} linearized at $U$ for $M=T^2$ which corresponds to eigenvalue $0$. Since the principal eigenvalue $\lambda_1$   is  positive, $\dfrac{V_{\theta\theta}}{n^2}+V$ must vanish identically.  Thus, it is easy to express $V$ as 
\begin{equation}\label{vsl}
V(\varphi,\theta)=C_1(\varphi) \cos (n\theta)+C_2(\varphi)\sin (n\theta)\ \mbox{ for every } (\varphi,\theta)\in I
\end{equation}
for some  functions $C_1(\varphi), C_2(\varphi)$ of class $C^2$.
It remains to examine  $C_1(\varphi)$ and $C_2(\varphi)$. 

Subtituting \eqref{vsl} into \eqref{vv} yields  the following two ordinary differential equations
\begin{align}
&C^{\prime \prime}_1(\varphi) -\dfrac{r\sin \varphi}{R+r\cos \varphi}C^\prime_1(\varphi)- B(\varphi) C_1(\varphi)=0,\label{ode1}\\
&C^{\prime \prime}_2(\varphi) -\dfrac{r\sin \varphi}{R+r\cos \varphi}C^\prime_2(\varphi)-B(\varphi) C_2(\varphi)=A(\varphi),\label{ode2}
\end{align}
where we set
$$
A(\varphi)=2r\left[-f(U)+ \dfrac{R\sin \varphi}{2r(R+r\cos \varphi)^2} U_{\varphi} \right] \ \mbox{ and }\ B(\varphi)=r^2\left\lbrack \dfrac{n^2}{(R+r\cos \varphi)^2} -f^\prime (U)\right\rbrack.
$$
We choose the number $N \in \mathbb N$ in Theorem \ref{mt} as
\begin{equation}
\label{large n}
\displaystyle N^2> \max_{\varphi \in S^1} |f^\prime (U)| (R+r)^2.
\end{equation}
 Let us assume that $n\ge N$ from now on. 
Then $B(\varphi)$ is positive everywhere. This fact together with \eqref{ode1} yields that $C_1(\varphi)\equiv 0$, since the maximum principle implies that $C_1$ achieves neither its positive maximum nor its negative minimum. Therefore, under \eqref{large n} we have from \eqref{vsl}
\begin{equation}\label{vsl2}
V(\varphi,\theta)=C_2(\varphi)\sin (n\theta)\ \mbox{ for every } (\varphi,\theta)\in I.
\end{equation}
% It suffices to  consider the solutions $C_1(\varphi)$ and $C_2(\varphi)$ only at $\varphi=\{0,\pi\}$. 
From \eqref{partial derivative in varphi}, we have that $U^\epsilon_\varphi\big|_{\varphi=0,\pi}=\epsilon V_\varphi\big|_{\varphi=0,\pi}+o(\epsilon)=0.$ Hence, 
\begin{equation*}
0=V_\varphi (\varphi,\theta)=C_2^\prime(\varphi)\sin (n\theta)\  \mbox{ for every } (\varphi,\theta)  \in \{0,\pi\} \times S^1.
\end{equation*}
%Then, by symmetry property of $U^\epsilon$ on $T^2_\epsilon$, we obtain 
Then we obtain 
\begin{equation}\label{boundary}
 C_2^\prime(0)= C_2^\prime(\pi)=0.
\end{equation}

Let us first show that $C_2(0)\not=0$. Multiplying \eqref{ode2} by $R+r\cos\varphi$ yields that
\begin{equation}
\label{pre 2nd order ode for C_2}
\left[(R+r\cos\varphi)C_2^\prime(\varphi)\right]^\prime=(R+r\cos\varphi)\left[B(\varphi)C_2(\varphi)+A(\varphi)\right].
\end{equation}
Then, with the aid of \eqref{boundary}, by integrating \eqref{pre 2nd order ode for C_2}
 in $\varphi$ from $0$ to $\pi$, we have
\begin{equation}\label{integral zero to pi}
\int_0^\pi(R+r\cos\varphi)\left[B(\varphi)C_2(\varphi)+A(\varphi)\right]d\varphi=0.
\end{equation}
Multiplying \eqref{ode for U} by $r^2(R+r\cos\varphi)$ yields that
\begin{equation}
\label{pre 2nd order ode for U}
\left[(R+r\cos\varphi)U^\prime(\varphi)\right]^\prime=-r^2(R+r\cos\varphi)f(U).
\end{equation}
Since $U^\prime(0)=U^\prime(\pi)=0$, by integrating \eqref{pre 2nd order ode for U}
 in $\varphi$ from $0$ to $\pi$, we have
\begin{equation}\label{integral zero to pi for f}
\int_0^\pi(R+r\cos\varphi)f(U)d\varphi=0.
\end{equation}
Then \eqref{integral zero to pi for f} yields that
$$
\int_0^\pi (R+r\cos\varphi)A(\varphi)d\varphi=\int_0^\pi\dfrac{R\sin \varphi}{R+r\cos \varphi} U^\prime d\varphi > 0,
$$
where we used that $U^\prime > 0$ in $(0,\pi)$, and hence by \eqref{integral zero to pi} we conclude that
\begin{equation}
\label{negativity of the important integral}
\int_0^\pi(R+r\cos\varphi)B(\varphi)C_2(\varphi)d\varphi<0.
\end{equation}
Suppose that $C_2(0)=0$.  Since $C^\prime_2(0)=0$, substituting $\varphi=0$ into \eqref{ode2} yields that
$$
C_2^{\prime\prime}(0)=A(0) = -2r f(U(0)) > 0,
$$
where we used \eqref{2nd derivatives}. Therefore there exists $\delta\in (0,\pi)$ satisfying
$$
C_2(\varphi) > 0 \mbox{ and } C_2^\prime(\varphi) > 0\ \mbox{ for every } \varphi \in (0,\delta).
$$
On the other hand, it follows from \eqref{negativity of the important integral} and the positivity of $B(\varphi)$ that $C_2(\varphi)$ must be negative at some point in $(0,\pi)$.
Thus we may find $\delta_*\in [\delta, \pi)$ such that
\begin{equation}
\label{positivity of C_2}
C_2^\prime(\delta_*)=0\ \mbox{ and } C_2(\varphi) > 0\ \mbox{ for every } \varphi \in (0,\delta_*).
\end{equation}
Then, replacing $\pi$ by $\delta_*$ and using the same arguments as in getting \eqref{integral zero to pi} and \eqref{integral zero to pi for f} yield that
$$
\int_0^{\delta_*}(R+r\cos\varphi)\left[B(\varphi)C_2(\varphi)+A(\varphi)\right]d\varphi=\int_0^{\delta_*}(R+r\cos\varphi)f(U)d\varphi=0.
$$
Also, by combining these and using that $U^\prime > 0$ in $(0,\pi)$, we arrive at
$$
\int_0^{\delta_*}(R+r\cos\varphi)B(\varphi)C_2(\varphi)d\varphi<0,
$$
which contradicts \eqref{positivity of C_2} and the positivity of $B(\varphi)$.  Thus this concludes that $C_2(0)\not=0$. Since from \eqref{boundary} $ C_2^\prime(\pi)= C_2^\prime(2\pi)=0$,
by employing the same argument on the interval $(\pi, 2\pi)$ as on $(0,\pi)$, we may also have that $C_2(\pi)\not=0$. Hence it follows from \eqref{vsl2} that for $(\varphi,\theta)\in \{0,\pi\}\times [0,2\pi)$
$$
V_\theta\not=0\ \mbox{ if }\ \theta\not\in \{\theta_k\, |\,  k=0,1, \dots, 2n-1\} \ \mbox{ and }\  V_{\theta\theta}\not=0\ \mbox{ if }\ \theta\in\{\theta_k\, |\,  k=0,1, \dots, 2n-1\},
$$
where  $\theta_k=\frac {2k+1}{2n}\pi$ are given in \eqref{hyperplanes for symmetry}.
Therefore, by combining this with \eqref{partial derivative in theta} and \eqref{du},  we find $\tau_2\in (0,\tau_1)$ such
that if $|\epsilon| < \tau_2$, the set of critical points of $U^\epsilon$ correspond to $\mathcal C$ given by \eqref{4n critical points},
which consists of exactly $4n$ points in $T^2_\epsilon$. This completes the proof of Theorem \ref{mt}.

% \begin{align*}
% \int_{0}^{\varphi^*}\left( (R+\cos \varphi)C_2^\prime \right)^\prime d\varphi&=\int_{0}^{\varphi^*} (R+\cos \varphi )\left(B(\varphi)C_2+ A(\varphi)\right)  \,\,d\varphi\\
%0&=\int_{0}^{\varphi^*} (R+\cos \varphi )\left(B(\varphi)C_2+ A(\varphi)\right)  \,\,d\varphi
% \end{align*} 

%\begin{enumerate}[label=\textit{Case \arabic*.}]
%	\item $C_1(0)=0$ and $C_2(0)\neq 0$.
%	\item $C_1 (0)\neq 0$  and $C_2(0)\neq 0$
%	\end{enumerate}

%%%%%%%%%%%%%%%%%%%%%%%%%%%%%
%%%%%%%%%%%%%%%%%%%%%%%%%%%%%
%%%%%%    References  %%%%%%%%%%%%%%%%
%%%%%%%%%%%%%%%%%%%%%%%%%%%%%
%%%%%%%%%%%%%%%%%%%%%%%%%%%%%	
\begin{small}
	 
\end{small}
\end{document}